\theoremstyle{change} % puts numbers IN FRONT of "Theorem"
\newtheorem{theorem}{Theorem}[section] % defines environment "Theorem".
\newtheorem{lemma}[theorem]{Lemma} % defines environment "Lemma", that
\newtheorem{remark}[theorem]{Remark}
\newtheorem{definition}[theorem]{Definition}
\newtheorem{nothing}[theorem]{} % empty Theoremumgebung.
\newenvironment{proof}{\noindent{\bf Proof}\ }{\qed\bigskip}
\renewcommand{\le}{\leqslant}
\renewcommand{\ge}{\geqslant} % needs amssymb-Paket
\renewcommand{\leq}{\leqslant}
\renewcommand{\geq}{\geqslant}
\newcommand{\BIGOP}[1]
  {\mathop{\mathchoice
  {\raise-0.22em\hbox{\huge $#1$}}
  {\raise-0.05em\hbox{\Large $#1$}}{\hbox{\large $#1$}}{#1}}}
\newcommand{\bullA}{\bullet_A}
\newcommand{\catfont}{\mathsf}
\newcommand{\CC}{\mathbb{C}}
\newcommand{\Hom}{\mathrm{Hom}}
\newcommand{\hgt}{\mathrm{ht}}
\newcommand{\id}{\mathrm{id}}
\newcommand{\NN}{\mathbb{N}}
\newcommand{\supp}{\mathrm{supp}}
\newcommand{\lposet}[1]{\llap{\phantom{|}}_{#1}\catfont{poset}}
\newcommand{\lPoset}[1]{\llap{\phantom{|}}_{#1}\catfont{Poset}}
\newcommand{\lsimp}[1]{\llap{\phantom{|}}_{#1}\catfont{simp}}
\newcommand{\lSimp}[1]{\llap{\phantom{|}}_{#1}\catfont{Simp}}
\newcommand{\ltop}[1]{\llap{\phantom{|}}_{#1}\catfont{top}}
\newcommand{\lTop}[1]{\llap{\phantom{|}}_{#1}\catfont{Top}}
\newcommand{\poset}{\catfont{poset}}
\newcommand{\Poset}{\catfont{Poset}}
\newcommand{\qed}{\nobreak\hfill
                   \vbox{\hrule\hbox{\vrule\hbox to 5pt
                   {\vbox to 8pt{\vfil}\hfil}\vrule}\hrule}}
\newcommand{\RR}{\mathbb{R}}
\newcommand{\scrH}{\mathscr{H}}
\newcommand{\scrS}{\mathscr{S}}
\newcommand{\scrT}{\mathscr{T}}
\newcommand{\scrU}{\mathscr{U}}
\newcommand{\setA}{\catfont{set}^A}
\newcommand{\SetA}{\catfont{Set}^A}
\newcommand{\sigmabar}{\bar{\sigma}}
\newcommand{\simp}{\catfont{simp}}
\newcommand{\Simp}{\catfont{Simp}}
\newcommand{\Top}{\catfont{Top}}
\title{Monomial structures, I}{}%{footnote{{\bf MR Subject Classification:}  ??, ??{\bf Keywords:}  ??, ??.}}
\author{\small Robert Boltje\\
  \small Department of Mathematics\\
  \small University of California\\
  \small Santa Cruz, CA 95064\\
  \small U.S.A.\\
  \small boltje@ucsc.edu
  \and
  \small Hatice Mutlu\\
  \small Department of Mathematics\\ 
  \small University of California\\
  \small Los Angeles, CA 90095\\
  \small U.S.A.\\
  \small hmutlu@math.ucla.edu}
\date{October 17, 2023}
\begin{document}
\sloppy

%%%%%%%%%%%%%%%%% TITLE %%%%%%%%%%%%%%%%%%%%%%%%%%%%%%%%%%%%%%

\maketitle

%%%%%%%%%%%%%%%%% ABSTRACT %%%%%%%%%%%%%%%%%%%%%%%%%%%%%%%%%%%

\begin{abstract}
The goal of a series of papers is to define $G$-actions on various $A$-fibered structures, where $G$ is a finite group and $A$ is an abelian group. One prominent such example is the $A$-fibered Burnside ring. If $A=\CC^\times$, it is also called the ring of monomial representations (introduced by Dress in \cite{Dress1971}) and is the natural home for the canonical induction formula  (see \cite{Boltje1990}). In this first part of the series, motivated by constructions in \cite{BoucMutlu}, we introduce $A$-fibered structures on posets, on abstract simplicial complexes, and on $A$-bundles over topological spaces, together with natural notions of homotopy, and functors between these structures respecting homotopy. In a sequel we will continue with $G$-representations in these $A$-fibered structures and associate to them elements in the $A$-fibered Burnside ring.
\end{abstract}

%%%%%%%%%%%%%% SECTION 1 %%%%%%%%%%%%%%%%%%%%%%%%%%%%%%%%%%%%%%

\section{Introduction}\label{sec intro}

Actions of a finite group $G$ on sets, vector spaces, topological spaces, bundles, partially ordered sets (posets for short), simplicial complexes, and other mathematical objects are commonly known concepts. Another such occurrence is an action of $G$ on the category of free $A$-sets for an abelian group $A$, which leads to the notion of an $A$-fibered $G$-set, or {\em $A$-monomial $G$-set}. To our knowledge, this concept was first introduced by Dress in \cite{Dress1971}. Dress studied the Grothendieck group of $A$-monomial $G$-sets with finitely many $A$-orbits as a generalization of, the {\em $A$-fibered Burnside ring} $\Omega(G,A)$ of $G$, as a generalization of the Burnside ring $\Omega(G)=\Omega(G,\{1\})$, where $A$ is the trivial group. The $\CC^\times$-fibered Burnside ring of $G$ and its functorial properties played also a key role in the canoncial Brauer induction formula, which takes values in $\Omega(G,A)$, see \cite{Boltje1990}.

More recently, Bouc and Mutlu introduced in \cite{BoucMutlu} the concept of $A$-monomial structures on $G$-posets. There it is shown that each $A$-monomial $G$-poset determines an element in $\Omega(G,A)$ and that each element in $\Omega(G,A)$ is associated to some $A$-monomial $G$-poset. Closely related to posets are abstract simplicial complexes and also topological spaces. Our eventual goal is to also introduce $A$-monomial $G$-simplicial complexes and $A$-bundles with $G$-action, since also they have invariants in $\Omega(G,A)$. Symonds' geometric approach to the canonical Brauer induction formula, for instance associates to the tautological $\CC^\times$-bundle on the projecive space of a $\CC$-vector space with linear $G$-action and element in $\Omega(G,A)$, see \cite{Symonds}. This raises the question if there is a categorification of the canonical induction formula that takes values in some category of discrete objects, like $A$-monomial $G$-posets, or better homotopy classes of such. This is the motivation for eventually also introducing natural notions of homotopy on the $A$-monomial $G$-posets, $A$-monomial $G$-simplicial complexes and $A$-bundles with $G$-action.

In the case of sets or posets, the $G$-action has been considered first and the $A$-monomial structure was added later as a generalization (of the case $A=\{1\}$). For efficiency, we will reverse this order. In this paper, we will introduce first $A$-monomial structures on posets, simplicial complexes and topological spaces, together with notions of homotopy and with functors between them that respect homotopy. In a follow-up paper we will introduce $A$-monomial $G$-structures by means of representations of $G$ in the various categories of $A$-monomial structures. Viewing such $G$-structures as functors form the one-object category $G$ to the various $A$-monomial categories, turns out to be more effective for proving the necessary properties. 

The paper is arranged as follows. In Section 2, 3, and 4, we introduce $A$-monomial posets, $A$-monomial simplicial complexes, and $A$-fibered bundles, respectively, together with natural notions of homotopy between them. In Section 5, we define functors between these three categories and show that they preserve homotopy.

%%%%%%%%%%%%%% SECTION 2 %%%%%%%%%%%%%%%%%%%%%%%%%%%%%%%%%%%%%%

\section{$A$-monomial posets}

Throughout this section, $A$ denotes a group. %We recall the definition of the category of $A$-monomial $G$-posets from \cite{BM} when the finite group $G$ is the trivial group and define a notion of {\em homotopy} for morphisms of this category.

\begin{nothing}\label{noth PosetA}
{\em The category $\lPoset{}^A$.}\quad An {\em $A$-monomial poset} is a pair $(X,l)$, consisting of a poset $X$ viewed as a category and a functor $\l\colon X \to \bullA$, where $\bullA$ denotes the category with a single object $\bullet$ and $\Hom_{\bullA}(\bullet,\bullet):=A$, composition given by multiplication in $A$. The functor $l$ is 
given by a family of elements $l(x,x')\in A$, one for each pair $(x,x')\in X\times X$ with $x\le x'$, satisfying
\begin{equation}\label{eqn l conditions poset}
l(x', x'') \cdot l(x,x') = l( x, x'')\quad \text{and} \quad l(x,x)=1_A\,,
\end{equation} 
whenever $x\le x'\le x''$ in $X$.

Let $(X,l)$ and $(Y,m)$ be two $A$-monomial posets. A {\em morphism} from $(X,l)$ to $(Y,m)$ is a pair $(f,\lambda)$ consisting of a morphism $f\colon X\to Y$ of posets and a natural transformation $\lambda\colon l\to m\circ f$ of functors $X\to\bullA$. The datum of the natural transformation $\lambda$ is equivalent to a function $\lambda\colon X\to A$ satisfying
\begin{equation}\label{eqn lambda conditions poset}
m(f(x),f(x')) \cdot \lambda(x) = \lambda(x') \cdot l(x,x')
\end{equation}
for every  $x,x'\in X$ with $x\le x'$. If also $(e,\mu)\colon (Y,m)\to(Z,n)$ is a morphism in $\lPoset{}^A$ then their composition is defined by 
\begin{equation*}
(e,\mu)\circ(f,\lambda):=(ef,(\mu*f)\circ \lambda)\,,
\end{equation*}
where $\mu*f\colon mf\to nef$ is the composition of the natural transformations $\mu\colon m\to ne$ and $\id_{f}\colon f\to f$, where $f\colon X\to Y$ is viewed as a functor. Note that the function $X\to A$ associated to $(\mu*f)\circ\lambda$ is given by \begin{equation*}
   x\mapsto \mu(f(x))\cdot\lambda(x)\,.
\end{equation*}

The $A$-monomial posets with the above morphisms define a category $\lPoset{}^A$. The identity morphism of $(X,l)$ is the morphism $(\id_X,1)$, where $1(x,x'):=1_A$ whenever $x\le x'$ in $X$. The full subcategory of finite $A$-monomial posets $(X,l)$ (i.e., where $X$ is finite) is denoted by $\lposet{}^A$.
\end{nothing}

\begin{remark}\label{rem SetA}
The category $\SetA$ (resp.~$\setA$) of free left $A$-sets (resp.~those with finitely many $A$-orbits) is equivalent to the full subcategory of $\Poset^A$ (resp.~$\poset^A$) consisting of objects $(X,l)$, where $X$ is a {\em discrete poset}, i.e., $x\le x'$ implies $x=x'$. In fact, one obtains a functor on this subcategory by sending the object $(X,l)$ to the free $A$-set $A\times X$ with $A$-action given by $a(b,x):=(ab,x)$ and by sending a morphism $(f,\lambda)\colon (X,l)\to(Y,m)$ to the $A$-equivariant map $A\times X\to A\times Y$, $(a,x)\mapsto (a\lambda(a)^{-1}, f(x))$. This functor is faithful and essentially surjective. In this sense, $\Poset^A$ generalizes the notion of \lq free $A$-bundles on sets\rq.
\end{remark}

\begin{nothing}{\em The partial order $(f,\lambda)\le (f',\lambda')$.} Let $(f,\lambda),(f',\lambda')\colon (X,l)\to (Y,m)$ be morphisms in $\lPoset{}^A$. We write $f\le f'$ if $f(x)\le f'(x)$ for all $x\in X$. The relation $f\le f'$ gives rise to a natural transformation
\begin{equation*}
\iota:=\iota_{f\le f',m}\colon m\circ f\to m\circ f'
\end{equation*}
between functors $X\to\bullA$, given by $\iota(x):=m(f(x),f'(x))$, for every $x\in X$. This is in fact a natural transformation, since,
for any $x,x'\in X$ with $x\le x'$, we have
\begin{equation*}
m(f'(x),f'(x'))\cdot m(f(x),f'(x)) = m(f(x),f'(x')) = m(f(x'), f'(x'))\cdot m(f(x), f(x'))\,.
\end{equation*}
Now we define
\begin{equation*}
(f,\lambda)\le (f',\lambda') :\iff f\le f'\quad\text{and}\quad \iota_{f\le f',m}\circ \lambda = \lambda'\,.
\end{equation*}
Note that the latter equality is equivalent to 
\begin{equation}\label{eqn le condition in poset}
m(f(x),f'(x)) \cdot \lambda(x) = \lambda'(x)\,,
\end{equation}
for all $x\in X$.
It is straightforward to verify that this defines a partial order on the set $\Hom_{\lPoset{}^A}((X,l),(Y,m))$.	
\end{nothing}

\begin{definition}	
We call two morphisms $(f,\lambda),(f',\lambda')\colon (X,l)\to(Y,m)$ in $\lPoset{}^A$ {\em comparable} and write $(f,\lambda)-(f',\lambda')$ if $(f,\lambda)\le (f',\lambda')$ or $(f',\lambda')\le (f,\lambda)$. The relation $-$ is reflexive and symmetric. We denote its transitive closure by $\sim$, i.e., $(f,\lambda)\sim(f,\lambda')$ if there exist $(f_0,\lambda_0),\ldots,(f_n,\lambda_n)\colon (X,l)\to (Y,m)$ in $\lPoset{}^A$ such that
\begin{equation*}
	(f,\lambda)=(f_0,\lambda_0) - (f_1,\lambda_1) - \cdots - (f_n,\lambda_n) = (f',\lambda')\,.
\end{equation*}
In this case we call the two morphisms $(f,\lambda)$ and $(f',\lambda')$ {\em homotopic}. Thus, $(f,\lambda)$ and $(f',\lambda')$ are homotopic if and only if they belong to the same connected component of the poset $\Hom_{\Poset^A}((X,l),(Y,m))$.
\end{definition}

\begin{lemma}\label{lem homotopy for posets}
Let $(f,\lambda),(f',\lambda')\colon (X,l)\to(Y,m)$ and $(e,\mu),(e',\mu')\colon (Y,m)\to (Z,n)$ be morphisms in $\lPoset{}^A$. If $(f,\lambda)\sim(f',\lambda')$ and $(e,\mu)\sim(e',\mu')$ then $(e,\mu)\circ(f,\lambda)\sim(e',\mu')\circ(f',\lambda')$.
\end{lemma}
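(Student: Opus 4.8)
The plan is to reduce everything to a single monotonicity statement. By the preceding Definition, the homotopy relation $\sim$ on a Hom-poset is the transitive closure of the comparability relation $-$, which is itself generated by the partial order $\le$. Consequently, if I can show that composition, regarded as a map of Hom-sets, is order-preserving in each of its two arguments separately, then it will carry $\le$-steps to $\le$-steps, hence comparability-steps to comparability-steps, and therefore respect the transitive closure $\sim$. This isolates the whole content of the lemma into two monotonicity claims.

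The first claim is that, for fixed $(e,\mu)\colon(Y,m)\to(Z,n)$, the assignment $(f,\lambda)\mapsto(e,\mu)\circ(f,\lambda)$ is order-preserving: if $(f,\lambda)\le(f',\lambda')$ then $(e,\mu)\circ(f,\lambda)\le(e,\mu)\circ(f',\lambda')$. Here I would first note $ef\le ef'$, which is immediate because $f(x)\le f'(x)$ and $e$ preserves order. The substantive point is to verify the label condition \eqref{eqn le condition in poset} for the two composites, whose label functions are $x\mapsto\mu(f(x))\lambda(x)$ and $x\mapsto\mu(f'(x))\lambda'(x)$. Writing it out, I must check $n(ef(x),ef'(x))\cdot\mu(f(x))\lambda(x)=\mu(f'(x))\lambda'(x)$, and this is where I expect the only real work to lie: it follows by applying the naturality of $\mu$ (that is, condition \eqref{eqn lambda conditions poset} for the morphism $(e,\mu)$) to the pair $f(x)\le f'(x)$, and then using the hypothesis $m(f(x),f'(x))\lambda(x)=\lambda'(x)$ coming from $(f,\lambda)\le(f',\lambda')$.

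The second claim is dual: for fixed $(f,\lambda)\colon(X,l)\to(Y,m)$, the assignment $(e,\mu)\mapsto(e,\mu)\circ(f,\lambda)$ is order-preserving. Again $ef\le e'f$ is immediate from $e\le e'$, and the label condition reduces to $n(ef(x),e'f(x))\cdot\mu(f(x))\lambda(x)=\mu'(f(x))\lambda(x)$, which now follows directly by applying the defining equation $n(e(y),e'(y))\mu(y)=\mu'(y)$ of $(e,\mu)\le(e',\mu')$ at $y=f(x)$ and right-multiplying by $\lambda(x)$. This case is lighter, since no naturality input is needed.

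With both monotonicity claims in hand, the lemma follows by a two-step comparison and transitivity of $\sim$: from $(f,\lambda)\sim(f',\lambda')$ and the first claim I get $(e,\mu)\circ(f,\lambda)\sim(e,\mu)\circ(f',\lambda')$, while from $(e,\mu)\sim(e',\mu')$ and the second claim I get $(e,\mu)\circ(f',\lambda')\sim(e',\mu')\circ(f',\lambda')$; concatenating the two chains yields $(e,\mu)\circ(f,\lambda)\sim(e',\mu')\circ(f',\lambda')$. The only genuine obstacle is the naturality bookkeeping in the first claim, where one must keep the left/right placement of the (possibly non-commutative) factors in $A$ straight; everything else is formal.
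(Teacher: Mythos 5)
Your proof is correct and takes essentially the same route as the paper's: the paper likewise reduces, via symmetry and transitivity of $\sim$, to the two one-sided monotonicity claims that $(f,\lambda)\le(f',\lambda')$ implies $(e,\mu)\circ(f,\lambda)\le(e,\mu)\circ(f',\lambda')$ and that $(e,\mu)\le(e',\mu')$ implies $(e,\mu)\circ(f',\lambda')\le(e',\mu')\circ(f',\lambda')$. The only difference is that you spell out the verifications (correctly, including the use of naturality of $\mu$ at $f(x)\le f'(x)$ in the first claim), which the paper dismisses as straightforward.
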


\begin{proof}
Since $\sim$ is is an equivalence relation, it suffices by symmetry and transitivity, to show that $(f,\lambda)\le (f',\lambda')$ implies $(e,\mu)\circ(f,\lambda)\le (e,\mu)\circ(f',\lambda')$ and that $(e,\mu)\le (e',\mu')$ implies $(e,\mu)\circ(f',\lambda')\le (e',\mu')\circ (f',\lambda')$. Both implications are straightforward verifications.
\end{proof}

The previous lemma allows us to define the homotopy category of $\Poset^A$.

\begin{definition}\label{def homotopy cat for posets}
We define the {\em homotopy category} $\scrH(\lPoset{}^A)$ of $\lPoset{}^A$ as the category whose objects are the same as those of $\lPoset{}^A$ and whose morphisms are homotopy classes $[f,\lambda]$ of morphisms $(f,\lambda)$ in $\lPoset{}^A$. 
Lemma~\ref{lem homotopy for posets} implies that the composition in $\lPoset{}^A$ induces a composition of homotopy classes of morphisms. 
By $\scrH(\lposet{}^A)$ we denote the full subcategory of $\scrH(\lPoset{}^A)$ whose objects are finite $A$-monomial posets. 
We have obvious functors $\lPoset{}^A\to\scrH(\lPoset{}^A)$ and $\lposet{}^A\to\scrH(\lposet{}^A)$.
\end{definition}

\begin{remark} If $A=\{1\}$ then the categories $\Poset^A$ and $\Poset$ are isomorphic via $(X,l)\mapsto X$.
Thus, the above definition yields in particular intrinsic notions of homotopy and a homotopy category $\scrH(\Poset)$. A similar statement holds for the finite versions.
\end{remark}

\begin{nothing}\label{noth refining <=} {\em Refining $(f,\lambda)\le(f',\lambda')$.}
The following set-up and construction will be used in Lemma~\ref{lem Sigma and homotopy}. 

\smallskip
(a) Assume that $X$ is a finite poset. Define the {\em height} $\hgt(x)$ of an element $x\in X$ as the maximum $i\in\NN_0$ such that there exists a strictly ascending chain $x_0<\ldots<x_i$ in $X$ with $x_i=x$. Note that if $x_0<\ldots<x_k$ is a strictly ascending chain in $X$ then 
\begin{equation*}
\hgt(x_0)<\ldots<\hgt(x_k)
\end{equation*}
and therefore $\{x_0,\ldots,x_k\}$ cannot have more than one element of a given height.

\smallskip
(b) Assume now that $X$ and $Y$ are posets, that $X$ is finite, and that $f\le f'$ in $\Hom_{\lPoset{}}(X,Y)$. Let $n\in\NN_0$ be the maximal height of all elements of $X$. With the notation from Part~(a) we define, for $i\in\{0,\ldots,n+1\}$, a function $f_i\colon X\to Y$ by
\begin{equation*}
f_i(x):= \begin{cases} f(x) & \text{if $\hgt(x) \le n-i$,} \\
f'(x) & \text{if $\hgt(x)> n-i$.}
\end{cases}
\end{equation*}
Then $f_0,\ldots,f_{n+1}$ are again morphisms of posets satisfying
\begin{equation*}
f=f_0\le \cdots \le f_{n+1} = f'\,.
\end{equation*}
In fact, for $x\in X$ and $i\in \{0,\ldots,n\}$, one has
\begin{align}\label{eqn f i explicit}
&  f_i(x) = f(x) = f_{i+1}(x) & \text{if $\hgt(x)<n-i$,}\notag\\
& f_i(x)=f(x)\le f'(x)=f_{i+1}(x) & \text{if $\hgt(x)=n-i$,} \\
& f_i(x)=f'(x)=f_{i+1}(x) & \text{if $\hgt(x)>n-i$.}\notag
\end{align}

\smallskip
(c) Finally, assume that $(X,l)$ and $(Y,m)$ are $A$-monomial posets, that $X$ is finite, and that $(f,\lambda)\le (f',\lambda')$ in $\Hom_{\lPoset{}^A}((X,l),(Y,m))$. With the above notation we define, for $i\in\{0,\ldots,n+1\}$, a function $\lambda_i\colon X\to A$ by
\begin{equation*}
     \lambda_i(x):= 
     \begin{cases} 
         \lambda(x) & \text{if $\hgt(x) \le n-i$,} \\ 
         \lambda'(x) & \text{if $\hgt(x)> n-i$.}
     \end{cases}
\end{equation*}
We claim that 
\begin{equation}\label{eqn refined morphisms}
     (f_0,\lambda_0),\ldots,(f_{n+1}, \lambda_{n+1})\in\Hom_{\lPoset{}^A}((X,l),(Y,m))
\end{equation} 
and
\begin{equation}\label{eqn refined <=}
     (f,\lambda)=(f_0,\lambda_0) \le \cdots \le (f_{n+1}, \lambda_{n+1}) = (f',\lambda')\,.
\end{equation}
In fact, for $x\in X$ and $i\in\{0,\ldots,n\}$, one has
\begin{align}\label{eqn lambda i explicit}
&  \lambda_i(x) = \lambda_{i+1}(x) = \lambda(x) & \text{if $\hgt(x)<n-i$,}\notag\\
& \lambda_i(x)=\lambda(x),\ \ \lambda_{i+1}(x)=\lambda'(x)  & \text{if $\hgt(x)=n-i$,}\\
& \lambda_i(x)=\lambda_{i+1}(x) = \lambda'(x) & \text{if $\hgt(x)>n-i$,}\notag
\end{align}
Now, for all $i\in\{0,\ldots,n+1\}$ and all $x,x'\in X$ satisfying $x\le x'$, the equation
\begin{equation*}
m(f_i(x),f_i(x')) \cdot \lambda_i(x) = \lambda_i(x') \cdot l(x,x')
\end{equation*}
holds by distinguishing the cases (i) $\hgt(x)\le\hgt(x')\le n-i$, (ii) $\hgt(x)\le n-i<\hgt(x')$, (iii) $n-i<\hgt(x)\le\hgt(x')$. Case~(i) follows from $(f,\lambda)$ being a morphism, Case (iii) follows from $(f',\lambda')$ being a morphism, and Case (ii) follows from
\begin{align*}
m(f(x),f'(x'))\cdot\lambda(x) 
& = m(f(x'), f'(x'))\cdot m(f(x), f(x')) \cdot \lambda(x)\\
& = m(f(x'), f'(x'))\cdot \lambda(x') \cdot l(x,x') = \lambda'(x') \cdot l(x,x')\,,
\end{align*}
since $(f,\lambda)\le(f',\lambda')$. Thus, the statement in (\ref{eqn refined morphisms}) holds. To verify the statement in (\ref{eqn refined <=}), note that clearly $(f_0,\lambda_0)=(f,\lambda)$ and $(f_{n+1},\lambda_{n+1})=(f',\lambda')$. Moreover, the equation
\begin{equation*}
m(f_i(x),f_{i+1}(x)) \cdot \lambda_i(x) = \lambda_{i+1}(x)
\end{equation*}
holds for all $i\in\{0,\ldots,n\}$ and $x\in X$ by distinguishing the cases (i) $\hgt(x)<n-i$, (ii) $\hgt(x)=n-i$, and (iii) $\hgt(x)>n-i$. In fact, in case (i) we use $m(f(x),f(x))=1$, in case (iii) we use $m(f'(x),f'(x))=1$, and in case (ii) we use that $(f,\lambda)\le (f',\lambda')$.
\end{nothing}

%%%%%%%%%%%%%% SECTION 3 %%%%%%%%%%%%%%%%%%%%%%%%%%%%%%%%%%%%%%

\section{$A$-monomial simplicial complexes}
Throughout this section,  $A$ denotes again a group. In this section we define the category of $A$-monomial simplicial complexes $\lSimp{}^A$ and its homotopy category $\scrH(\lSimp{}^A)$.

\begin{nothing}{\em The category $\lSimp{}^A$.}\quad A {\em simplicial complex} is a pair $(S, \mathscr{S})$ consisting of a set $S$ and 
a set $\mathscr{S}$ of nonempty finite subsets of $S$ such that 
$\{s\} \in \mathscr{S}$, for every $s\in S$, and such that $\sigma\in\scrS$ and $\emptyset\neq\sigma'\subseteq\sigma$ implies $\sigma'\in\scrS$.
The elements $\sigma$ of $\scrS$ are called the {\em simplices} of $(S,\scrS)$. 
A {\em morphism} (or {\em simplicial map}) between two simplicial complexes $(S,\scrS)$ and $(T,\scrT)$, is a map $f\colon S\to T$ such that $f(\sigma)\in\scrT$ for every $\sigma\in\scrS$.
The simplicial complexes and their morphisms form a category that we denote by $\lSimp{}$. 
The full subcategory of object $(S,\scrS)$ where $S$ is finite is denoted by $\lsimp{}$. 

Given a simplicial complex $(S, \mathscr{S})$, the set of simplices $\scrS$ can be viewed as a partially ordered set via inclusion of subsets of S.

An {\em $A$-monomial simplicial complex} is a triple $(S, \mathscr{S}, l)$ consisting 
of a simplicial complex $(S, \mathscr{S})$ and a functor $l\colon \mathscr{S} \rightarrow \bullA$ where the poset $\scrS$ is viewed as a category. The datum of the functor $l$ is equivalent to a family of elements $l(\sigma,\sigma')\in A$, one for each pair $(\sigma,\sigma')\in \scrS\times\scrS$ with $\sigma\subseteq\sigma'$, satisfying
\begin{equation}\label{eqn l conditions simp}
   l(\sigma',\sigma'') \cdot l(\sigma,\sigma') = l(\sigma,\sigma'')\quad\text{and}\quad l(\sigma,\sigma)=1_A
\end{equation}
whenever $\sigma\subseteq\sigma'$ and $\sigma'\subseteq\sigma''$ for every $\sigma,\sigma',\sigma''\in\scrS$.

Let $(S,\mathscr{S}, l)$ and $(T,\mathscr{T}, m)$ be two $A$-monomial simplicial complexes. 
A {\em morphism} (or {\em $A$-monomial simplicial map}) from $(S,\mathscr{S}, l)$ to $(T, \mathscr{T}, m)$ is a pair $(f, \lambda)$ consisting of a simplicial map $f\colon (S,\scrS) \to (T,\scrT)$ and a natural transformation $\lambda\colon l \rightarrow m\circ f$.
Note that the datum of the natural transformation $\lambda$ is equivalent to a function $\lambda\colon\scrS\to A$ satisfying
\begin{equation}\label{eqn lambda conditions simp}
   m(f(\sigma),f(\sigma'))\cdot \lambda(\sigma) = \lambda(\sigma')\cdot l(\sigma,\sigma')
\end{equation}
for every $\sigma,\sigma'\in\scrS$ with $\sigma\subseteq \sigma'$.
If also $(e, \mu)\colon (T, \mathscr{T}, m)\to (U, \mathscr{U}, n)$ is a morphism between $A$-monomial simplicial complexes then the composition of $(e, \mu)$ and $(f, \lambda)$ is given by 
$(e, \mu)\circ(f,\lambda):=(ef,(\mu*f)\circ \lambda)$, with $\mu*f\colon mf\to nef$ as in \ref{noth PosetA}. Note that the function $\scrS\to A$ associated to the natural transformation $(\mu*f)\circ \lambda$ is given by $\sigma\mapsto \mu(f(\sigma))\cdot\lambda(\sigma)$.

The category of $A$-monomial simplicial complexes and their morphisms will be denoted by $\lSimp{}^A$. The identity morphism of $(S,\scrS,l)$ is the morphism $(\id_S,1)$, where $1(\sigma,\sigma'):=1_A$ whenever $\sigma\subseteq\sigma'$. We will denote the full subcategory of finite $A$-monomial simplicial complexes by $\lsimp{}^A$.
\end{nothing}

\begin{nothing}{\em The relation $(f,\lambda) - (f',\lambda')$.} Let $f, f'\colon (S,\scrS)\to(T,\scrT)$ be morphisms in $\lSimp{}$. Following \cite[Chapter~3, Section~5]{Spanier}, we say that $f$ and $f'$ are {\em contiguous} and write $f-f'$ if $f(\sigma)\cup f'(\sigma)\in\scrT$ for every $\sigma\in\scrS$.
	
Now let $(f,\lambda),(f',\lambda')\colon (S,\scrS,l)\to (T,\scrT, m)$ be morphisms in $\lSimp{}^A$ such that the simplicial maps $f$ and $f'$ are contiguous. Then, this gives rise to a natural transformation
\begin{equation*}
\iota:= \iota_{f-f',m}\colon m\circ f\to m\circ f'
\end{equation*}
between functors $\scrS\to \bullA$, given by
\begin{equation*}
\iota(\sigma):= m(f'(\sigma),f(\sigma)\cup f'(\sigma))^{-1}\cdot m(f(\sigma), f(\sigma)\cup f'(\sigma))\,,
\end{equation*}
for $\sigma\in\scrS$. This is in fact a natural transformation, since, for any $\sigma,\sigma'\in\scrS$ with $\sigma\subseteq \sigma'$, we have
\begin{align*}
   & m(f'(\sigma),f'(\sigma'))\cdot \iota(\sigma) \\
   =\ & m(f'(\sigma),f'(\sigma')) \cdot m(f'(\sigma), f(\sigma)\cup f'(\sigma))^{-1} \cdot 
   m(f(\sigma), f(\sigma)\cup f'(\sigma)) \\
   =\ & m(f'(\sigma'), f(\sigma')\cup f'(\sigma'))^{-1}\cdot m(f(\sigma)\cup f'(\sigma), f(\sigma')\cup f'(\sigma')) \cdot
   m(f(\sigma), f(\sigma)\cup f'(\sigma)) \\
   =\ & m(f'(\sigma'), f(\sigma')\cup f'(\sigma'))^{-1}\cdot m(f(\sigma), f(\sigma')\cup f'(\sigma')) \\
   =\ & m(f'(\sigma'), f(\sigma')\cup f'(\sigma'))^{-1}\cdot m(f(\sigma'), f(\sigma')\cup f'(\sigma')) \cdot
   m(f(\sigma), f(\sigma')) \\
   =\ & \iota(\sigma') \cdot m(f(\sigma), f(\sigma'))\,.
\end{align*}

We now define the {\em contiguity} relation on $\Hom_{\lSimp{}^A}((S,\scrS,l), (T,\scrT,m))$ by
\begin{equation*}
(f,\lambda) - (f',\lambda) :\iff f-f' \text{ in } \lSimp{}\quad \text{and} \quad \iota_{f-f',m}\circ\lambda = \lambda'\,.
\end{equation*}
Note that the latter equation is equivalent to
\begin{equation}\label{eqn - condition in simp}
m(f(\sigma),f(\sigma)\cup f'(\sigma))\cdot \lambda(\sigma) =  m(f'(\sigma), f(\sigma)\cup f'(\sigma))\cdot \lambda'(\sigma)\,,
\end{equation}
for all $\sigma\in\scrS$.

It is straightforward to verify that the contiguity relation on $\Hom_{\lSimp{}^A}((S,\scrS,l),(T,\scrT,m))$ is symmetric and reflexive.
\end{nothing}

\begin{lemma}\label{lem iota properties simp}
Let $(f,\lambda),(f',\lambda') \colon (S,\scrS,l)\to (T,\scrT,m)$ and $(e,\mu), (e',\mu')\colon (T,\scrT, m)\to (U,\scrU,n)$ be morphisms in $\lSimp{}^A$.
	
\smallskip
{\rm (a)} If $f-f'$ in $\lSimp{}$ then $ef-ef'$ in $\lSimp{}$ and
\begin{equation*}
	\iota_{ef-ef',n} \circ (\mu*f) = (\mu*f') \circ \iota_{f-f',m}\,.	
\end{equation*}
	
\smallskip
{\rm (b)} If $e-e'$ in $\lSimp{}$ then $ef-e'f$ is $\lSimp{}$ and 
	\begin{equation*}
	\iota_{ef-e'f,n} = \iota_{e-e',n}*f\,.
\end{equation*}
	
\smallskip
{\rm (c)} If $(f,\lambda)-(f',\lambda')$ and $(e,\mu)-(e',\mu')$ then $(e,\mu)\circ (f,\lambda) - (e,\mu)\circ (f',\lambda')$ and $(e,\mu)\circ(f,\lambda) - (e',\mu')\circ(f,\lambda)$.
\end{lemma}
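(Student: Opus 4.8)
The plan is to establish parts (a) and (b) first---these are the compatibility laws between the $\iota$-transformations and horizontal composition (whiskering)---and then to deduce part (c) by a short formal manipulation that routes each hypothesis through the appropriate one of (a), (b). Throughout I would evaluate every natural transformation at a simplex $\sigma\in\scrS$, remembering that vertical composition of natural transformations into $\bullA$ is multiplication in $A$ in the order \lq outer factor times inner factor\rq, that the whiskering $\iota_{e-e',n}*f$ has component $\iota_{e-e',n}(f(\sigma))$, and that $\mu*f$ has component $\mu(f(\sigma))$. Since $A$ is not assumed abelian, keeping these orders straight is the crux of the bookkeeping.

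For part (a), the contiguity $ef-ef'$ holds because $ef(\sigma)\cup ef'(\sigma)=e\bigl(f(\sigma)\cup f'(\sigma)\bigr)$ (a map carries a union of images to the image of the union), and the right-hand set lies in $\scrU$ since $f(\sigma)\cup f'(\sigma)\in\scrT$ (from $f-f'$) and $e$ is simplicial. For the displayed identity I would evaluate both sides at $\sigma$, abbreviate $P:=f(\sigma)$, $P':=f'(\sigma)$, $W:=P\cup P'$, and expand $\iota_{ef-ef',n}(\sigma)$ using $e(W)=ef(\sigma)\cup ef'(\sigma)$. The left-hand side then reads $n(e(P'),e(W))^{-1}\,n(e(P),e(W))\,\mu(P)$ and the right-hand side reads $\mu(P')\,m(P',W)^{-1}\,m(P,W)$. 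These are reconciled by applying the naturality of $\mu$---namely \eqref{eqn lambda conditions simp} for the morphism $(e,\mu)$---once to the inclusion $P'\subseteq W$ and once to $P\subseteq W$; each application rewrites $\mu(P')$ and $\mu(P)$ in terms of a common $\mu(W)$ conjugated by the appropriate $m$- and $n$-values, and the two occurrences of $\mu(W)$ cancel. This double use of naturality is the only genuine computation in the lemma.

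For part (b), the contiguity $ef-e'f$ holds because, with $\tau:=f(\sigma)\in\scrT$, one has $ef(\sigma)\cup e'f(\sigma)=e(\tau)\cup e'(\tau)\in\scrU$ by $e-e'$. The identity $\iota_{ef-e'f,n}=\iota_{e-e',n}*f$ is then immediate: evaluating the left-hand side at $\sigma$ and substituting $ef(\sigma)=e(f(\sigma))$ and $e'f(\sigma)=e'(f(\sigma))$ reproduces verbatim the formula defining $\iota_{e-e',n}$ at the simplex $f(\sigma)$, which is precisely the component of $\iota_{e-e',n}*f$ at $\sigma$.

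For part (c), I would prove the two asserted contiguities separately, since neither requires transitivity. Recall that $(e,\mu)\circ(f,\lambda)=(ef,(\mu*f)\circ\lambda)$, and similarly for the primed data. For $(e,\mu)\circ(f,\lambda)-(e,\mu)\circ(f',\lambda')$ the underlying maps are contiguous by (a), and the remaining condition to verify is $\iota_{ef-ef',n}\circ(\mu*f)\circ\lambda=(\mu*f')\circ\lambda'$; rewriting the first two factors by part (a) as $(\mu*f')\circ\iota_{f-f',m}$ and then invoking the hypothesis $\iota_{f-f',m}\circ\lambda=\lambda'$ closes it. For $(e,\mu)\circ(f,\lambda)-(e',\mu')\circ(f,\lambda)$ the maps are contiguous by (b), and the condition is $\iota_{ef-e'f,n}\circ(\mu*f)\circ\lambda=(\mu'*f)\circ\lambda$; here I would rewrite $\iota_{ef-e'f,n}=\iota_{e-e',n}*f$ by part (b), use that whiskering by $f$ preserves vertical composition so that $(\iota_{e-e',n}*f)\circ(\mu*f)=(\iota_{e-e',n}\circ\mu)*f$, and then apply the hypothesis $\iota_{e-e',n}\circ\mu=\mu'$. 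The main obstacle lies entirely in part (a)---the careful, noncommutative bookkeeping of the two naturality substitutions---after which (b) and (c) are purely formal.
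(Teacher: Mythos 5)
Your proposal is correct and follows essentially the same route as the paper's proof: the contiguities in (a) and (b) via $ef(\sigma)\cup ef'(\sigma)=e\bigl(f(\sigma)\cup f'(\sigma)\bigr)$ and $ef(\sigma)\cup e'f(\sigma)=e(f(\sigma))\cup e'(f(\sigma))$, the identity in (a) by two applications of the naturality of $\mu$ (to $f(\sigma)\subseteq f(\sigma)\cup f'(\sigma)$ and $f'(\sigma)\subseteq f(\sigma)\cup f'(\sigma)$) with the $\mu(f(\sigma)\cup f'(\sigma))$ terms cancelling, (b) by direct substitution, and (c) formally from (a) and (b) using the interchange $(\iota_{e-e',n}*f)\circ(\mu*f)=((\iota_{e-e',n}\circ\mu)*f)$. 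No gaps; the noncommutative bookkeeping you flag as the crux is exactly where the paper's computation lives as well.
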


\begin{proof}
(a) Let $\sigma\in\scrS$ and set $\tau:=f(\sigma)\cup f'(\sigma)$. Then $\tau\in\scrT$, since $f-f'$, and we obtain $ef(\sigma)\cup ef'(\sigma) = e(\tau)\in\scrU$. Thus, $ef-ef'$. Moreover,
\begin{align*}
   (\iota_{ef-ef',n} \circ (\mu*f))(\sigma) & = n(ef'(\sigma), e(\tau))^{-1} \cdot n(ef(\sigma),e(\tau)) \cdot \mu(f(\sigma)) \\
   & = n(ef'(\sigma), e(\tau))^{-1} \cdot \mu(\tau) \cdot m(f(\sigma), \tau)\\
   & = \mu(f'(\sigma)) \cdot m(f'(\sigma),\tau)^{-1} \cdot m(f(\sigma), \tau) = ((\mu*f')\circ \iota_{f-f',m})(\sigma)\,.  
\end{align*}

\smallskip
(b) Let $\sigma\in\scrS$. Then $\tau:=ef(\sigma)\cup e'f(\sigma)\in\scrU$, since $f(\sigma)\in\scrT$ and $e-e'$. Thus, we get $ef-e'f$. Moreover, both $\iota_{ef-e'f,n}$ and $\iota_{e-e',n}*f$ evaluated at $\sigma\in\scrS$ are equal to $m(e'f(\sigma), \tau)^{-1}\cdot m(ef(\sigma), \tau)$.

\smallskip
(c) For the first part we need to show that $(ef, (\mu*f)\circ \lambda) - (ef', (\mu*f')\circ \lambda')$. But by Part~(a) we have $ef-ef'$ and 
\begin{equation*}
\iota_{ef-ef',n}\circ (\mu*f)\circ \lambda = (\mu*f') \circ \iota_{f-f',m} \circ \lambda = (\mu*f') \circ \lambda'\,.
\end{equation*}

\smallskip
For the second part we need to show that $(ef, (\mu*f)\circ \lambda) - (e'f, (\mu'*f)\circ \lambda)$. But, by Part~(b) we have $ef-e'f$ and 
\begin{equation*}
\iota_{ef-e'f,n} \circ (\mu*f)\circ \lambda  = (\iota_{e-e',n}*f)\circ (\mu*f)\circ \lambda 
= ((\iota_{e-e',n}\circ\mu)*f) \circ \lambda = (\mu'*f)\circ \lambda\,.
\end{equation*}
\end{proof}

\begin{definition}
Let $(S,\scrS,l)$ and $(T,\scrT,m)$ be $A$-monomial simplicial complexes. We denote the transitive closure of the relation $-$ on $\Hom_{\lSimp{}^A}((S,\scrS,l),(T,\scrT,m))$ by $\sim$ and call two morphisms $(f,\lambda), (f',\lambda')\in \Hom_{\lSimp{}^A}((S,\scrS,l),(T,\scrT,m))$ {\em homotopic} if $(f,\lambda)\sim (f',\lambda')$.
\end{definition}

The following Lemma follows now immediately from Lemma~\ref{lem iota properties simp}(c).

\begin{lemma}\label{lem homotopy for simp}
Let $(f,\lambda),(f',\lambda')\colon(S,\mathscr{S},l)\to(T,\mathscr{T},m)$ and $(e,\mu),(e',\mu')\colon(T,\mathscr{T},m)\to(R, \mathscr{R},n)$ be morphisms in $\lSimp{}^A$. If $(f,\lambda)\sim(f',\lambda')$ and $(e,\mu)\sim (e',\mu')$ then $(e,\mu)\circ (f,\lambda) \sim (e',\mu')\circ (f',\lambda')$.
\end{lemma}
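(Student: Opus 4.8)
The plan is to reduce the statement about the equivalence relation $\sim$ to the single-step relation $-$, exactly as was done in the poset case in Lemma~\ref{lem homotopy for posets}, and then invoke the functoriality of composition with respect to $-$ that has already been established in Lemma~\ref{lem iota properties simp}(c). Since $\sim$ is by definition the transitive closure of $-$, and since composition on either side is a well-defined operation on morphisms, it suffices to prove the claim in the case where each of the two hypotheses is a single contiguity step rather than a chain; the general case then follows by chaining these single-step comparisons together and using transitivity of $\sim$.

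More precisely, first I would observe that because $\sim$ is an equivalence relation, it is enough to show the two \emph{one-sided} single-step implications: that $(f,\lambda)-(f',\lambda')$ implies $(e,\mu)\circ(f,\lambda)\sim(e,\mu)\circ(f',\lambda')$, and that $(e,\mu)-(e',\mu')$ implies $(e,\mu)\circ(f,\lambda)\sim(e',\mu')\circ(f,\lambda)$. But both of these are immediate from Lemma~\ref{lem iota properties simp}(c), which states precisely that $(f,\lambda)-(f',\lambda')$ and $(e,\mu)-(e',\mu')$ yield $(e,\mu)\circ(f,\lambda)-(e,\mu)\circ(f',\lambda')$ and $(e,\mu)\circ(f,\lambda)-(e',\mu')\circ(f,\lambda)$ (one takes $(e',\mu')=(e,\mu)$ in the first and $(f',\lambda')=(f,\lambda)$ in the second, using reflexivity of $-$). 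Since $-$ refines $\sim$, each such single step is in particular a $\sim$-relation.

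Now I would assemble the full result. Suppose $(f,\lambda)\sim(f',\lambda')$ via a chain $(f,\lambda)=(f_0,\lambda_0)-\cdots-(f_p,\lambda_p)=(f',\lambda')$ and $(e,\mu)\sim(e',\mu')$ via a chain $(e,\mu)=(e_0,\mu_0)-\cdots-(e_q,\mu_q)=(e',\mu')$. Composing each intermediate $A$-monomial simplicial map with a fixed morphism on the appropriate side and applying the single-step implications above, I obtain
\begin{equation*}
(e,\mu)\circ(f,\lambda)\sim(e,\mu)\circ(f',\lambda')\sim(e',\mu')\circ(f',\lambda')\,,
\end{equation*}
where the first relation comes from chaining the $q=0$ reduction (vary $f_i$ with $(e,\mu)$ fixed) and the second from chaining the analogous reduction (vary $e_j$ with $(f',\lambda')$ fixed). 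Transitivity of $\sim$ then gives $(e,\mu)\circ(f,\lambda)\sim(e',\mu')\circ(f',\lambda')$, as desired.

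I do not expect any genuine obstacle here, since the paper remarks that the lemma follows \emph{immediately} from Lemma~\ref{lem iota properties simp}(c); the entire content has been front-loaded into that earlier lemma, whose proof carried out the delicate cocycle computations with the $\iota$ maps. The only point requiring a modicum of care is the bookkeeping of passing from single-step relations to chains and correctly tracking which argument is held fixed at each stage, together with the observation that a single $-$ step is automatically a $\sim$ relation. No new identities involving $m$, $n$, or the natural transformations need to be verified at this stage.
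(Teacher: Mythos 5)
Your proposal is correct and is essentially the paper's own proof: the paper simply states that the lemma ``follows now immediately from Lemma~\ref{lem iota properties simp}(c)'', and your argument just makes explicit the routine chaining through the transitive closure (using reflexivity of $-$ to specialize Lemma~\ref{lem iota properties simp}(c) to the two one-sided steps). No discrepancies to report.
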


Lemma~\ref{lem homotopy for simp} allows us to form the homotopy category of $\lSimp{}^A$.

\begin{definition}
We define the {\em homotopy category} $\scrH(\lSimp{}^A)$ of $\lSimp{}^A$ as the category whose objects are the same as those of $\lSimp{}^A$ and whose morphisms are homotopy classes $[f,\lambda]$ of morphisms $(f,\lambda)$ in $\lSimp{}^A$. Lemma~\ref{lem homotopy for simp} implies that the composition in $\lSimp{}^A$ induces a composition of homotopy classes of morphisms. By $\scrH(\lsimp{}^A)$ we denote the full subcategory of $\scrH(\lSimp{}^A)$ whose objects are finite $A$-monomial simplicial complexes. We obtain obvious functors $\lSimp{}^A\to \scrH(\lSimp{}^A)$ and $\lsimp{}^A\to\scrH(\lsimp{}^A)$.
\end{definition}

\begin{remark}
If $A=\{1\}$ then the categories $\lSimp{}^A$ and $\lSimp{}$ are isomorphic via $(S,\scrS, l)\mapsto (S,\scrS)$. 
Thus, the above definition yields in particular intrinsic notions of homotopy classes for $\Simp$ (which coincide with contiguity classes on $\Simp$ in \cite[Section 3.5]{Spanier}) and a homotopy category $\scrH(\Simp)$. A similar statement holds for the finite versions.
\end{remark}

%%%%%%%%%%%%%% SECTION 4 %%%%%%%%%%%%%%%%%%%%%%%%%%%%%%%%%%%%%%

\section{$A$-fibered bundles}
Let $A$ be a topological group. In this section we will define the category of $A$-fibered bundles $\lTop{}^A$ and its homotopy category $\scrH(\lTop{}^A)$. 

\begin{nothing}{\em The category $\lTop{}^A$.}\quad An $A$-fibered bundle is a triple $(\pi, E, X)$ where $E$ is a topological space on which $A$ acts via homeomorphisms, $X$ is a topological space equipped with the trivial $A$-action, and  $\pi \colon E \to X$ is a surjective $A$-equivariant continuous map, such that for every $x \in X$ there exists an open neighborhood $U$ of $x$ in $X$ and an $A$-equivariant homeomorphism $\varphi \colon \pi^{-1}(U) \to U \times A$ making the diagram

$$\xymatrix@R=4ex@C=3ex{
	\pi^{-1}(U)\ar[rr]^-\varphi\ar[rd]_-{\pi}&&U\times A\ar[ld]^{p_1}\\
	&U&
}
$$
commutative. Here, $A$ acts on $U\times A$ via $a(u,b):=(u,ab)$. In particular, the subspace $\pi^{-1}(\{x\})$ of $E$ is mapped homeomorphically and $A$-equivariantly onto $\{x\} \times A$. Therefore, $\pi^{-1}(\{x\})$ is $A$-stable and isomorphic to $A$ as a topological space and as an $A$-set. 

Let $\pi \colon E \to X$ and $\pi' \colon E' \to X'$ be two $A$-fibered bundles. A {\em morphism} from $\pi \colon E \to X$ to $\pi' \colon E' \to X'$ is a pair $(\varphi, f)$ consisting of a continuous map $f \colon X \to X'$ and an $A$-equivariant continuous map $\varphi \colon E \to E'$ such that the diagram
\begin{equation*}
\xymatrix@C=7ex@R=7ex{
	E\ar[r]^-{\varphi}\ar[d]_-{\pi}&E'\ar[d]^-{\pi'}\\
	X\ar[r]_-{f}& X'}
\end{equation*}
commutes.

The category of $A$-fibered bundles and their morphisms with the obvious composition
will be denoted by $\lTop{}^A$. The identity morphism of $\pi \colon E \to X$ is the morphism $(\id_E, \id_X)$.
For any topological space $X$, the map $p_X \colon A \times X \to X$, $(a,x) \mapsto x$, is called the {\em trivial $A$-fibered bundle on $X$}. \end{nothing}

Recall that two morphisms $f,f'\colon X \to X'$ in the category $\Top$ of topological space are called {\em homotopic}, denoted by $f\sim f'$, if there exists a morphism $h \colon X \times I \to X'$ in $\Top$ such that $h(x,0)=f(x)$ and $h(x,1)=f'(x)$ for all $x \in X$. Here $I=[0,1]$ denotes the unit interval.

\begin{definition}
Let  $(\varphi,f), (\varphi', f') \colon (\pi \colon E \to X) \to (\pi'\colon E' \to X')$ be morphisms in $\Top^A$. We say that $(\varphi , f)$ and $(\varphi', f')$ are {\em homotopic} and write $(\varphi,f) \sim (\varphi', f')$ if there exists an $A$-equivariant continuous map $H\colon E\times I \to E'$ and a continuous map $h\colon X\times I\to X'$ such that 
\begin{equation*}
\xymatrix@C=7ex@R=7ex{
	E\times I\ar[r]^-{H}\ar[d]_-{\pi\times 1}&E'\ar[d]^-{\pi'}\\
	X\times I \ar[r]_-{h}& X'}
\end{equation*}
commutes and $H(e,0)= \varphi(e)$, $H(e,1)=\varphi'(e)$, $h(x,0)=f(x)$,and $h(x,1)=f'(x)$ for all $e\in E$ and all $x\in X$. Here $A$ acts trivially on $I$.
\end{definition}

It is straightforward to verify that the homotopy relation on
$\Hom_{\lTop{}^A}((\pi\colon E \to X), (\pi'\colon E'\to X'))$ is symmetric, reflexive and transitive and that the following Lemma holds.

\begin{lemma}\label{lem homotopy for top}
Let  $(\varphi,f), (\varphi', f') \colon (\pi \colon E \to X) \to (\pi'\colon E' \to X')$ and $(\psi, e), (\psi',e') \colon (\pi'\colon E' \to X') \to (\pi''\colon E'' \to X'')$ be morphisms in $\Top^A$. If $(\varphi,f)\sim(\varphi',f')$ and $(\psi,e)\sim(\psi',e')$ then $(\psi,e)\circ (\varphi,f) \sim (\psi', e')\circ (\varphi',f')$.	
\end{lemma}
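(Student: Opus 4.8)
The plan is to exploit that the homotopy relation $\sim$ is transitive (as noted immediately before the statement) and to factor the desired homotopy through the intermediate morphism $(\psi,e)\circ(\varphi',f')$. Writing the composition in $\Top^A$ as $(\psi,e)\circ(\varphi,f)=(\psi\circ\varphi,\,e\circ f)$, it suffices to prove the two special cases
\begin{equation*}
(\psi,e)\circ(\varphi,f)\sim(\psi,e)\circ(\varphi',f')\quad\text{and}\quad (\psi,e)\circ(\varphi',f')\sim(\psi',e')\circ(\varphi',f')\,,
\end{equation*}
that is, that post-composition with a fixed morphism and pre-composition with a fixed morphism each preserve homotopy; the assertion then follows by transitivity. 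Let $(H,h)$ denote the homotopy data witnessing $(\varphi,f)\sim(\varphi',f')$, so that $H\colon E\times I\to E'$ is $A$-equivariant, $h\colon X\times I\to X'$ is continuous, the square $\pi'\circ H=h\circ(\pi\times 1)$ commutes, and $H,h$ restrict to $\varphi,\varphi',f,f'$ at the ends. Let $(K,k)$, with $K\colon E'\times I\to E''$ $A$-equivariant and $k\colon X'\times I\to X''$ continuous, be the analogous data witnessing $(\psi,e)\sim(\psi',e')$.

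For the first special case I would post-compose the homotopy, taking $\psi\circ H\colon E\times I\to E''$ and $e\circ h\colon X\times I\to X''$. Continuity and the correct endpoint values are immediate, and $\psi\circ H$ is $A$-equivariant because both $\psi$ and $H$ are. The square
\begin{equation*}
\pi''\circ(\psi\circ H)=(e\circ\pi')\circ H=e\circ(\pi'\circ H)=e\circ h\circ(\pi\times 1)=(e\circ h)\circ(\pi\times 1)
\end{equation*}
commutes, where the first equality uses that $(\psi,e)$ is a morphism in $\Top^A$ and the third uses the homotopy square for $(H,h)$. Hence $\psi\circ H$ and $e\circ h$ exhibit $(\psi\circ\varphi,\,e\circ f)\sim(\psi\circ\varphi',\,e\circ f')$.

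For the second special case I would pre-compose the homotopy $(K,k)$ with $(\varphi',f')$, taking $K\circ(\varphi'\times 1)\colon E\times I\to E''$ and $k\circ(f'\times 1)\colon X\times I\to X''$. Since $A$ acts trivially on $I$, the map $\varphi'\times 1$ is $A$-equivariant, so $K\circ(\varphi'\times 1)$ is too, and the endpoint conditions again hold by inspection. For the commuting square I would combine the morphism condition $\pi'\circ\varphi'=f'\circ\pi$ with the homotopy square $\pi''\circ K=k\circ(\pi'\times 1)$ to compute
\begin{equation*}
\pi''\circ K\circ(\varphi'\times 1)=k\circ(\pi'\times 1)\circ(\varphi'\times 1)=k\circ(f'\times 1)\circ(\pi\times 1)\,,
\end{equation*}
which is exactly what is required. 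This yields $(\psi\circ\varphi',\,e\circ f')\sim(\psi'\circ\varphi',\,e'\circ f')$, and transitivity then combines the two cases to give the claim.

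I do not expect a genuine obstacle: as with Lemmas~\ref{lem homotopy for posets} and~\ref{lem homotopy for simp}, the content is a bookkeeping verification of the equivariance, endpoint, and commutativity conditions. The only points demanding a little care are keeping straight which commutativity condition (the morphism square versus the homotopy square) is invoked at each stage, and observing that it is precisely the trivial $A$-action on $I$ that makes $\varphi'\times 1$ $A$-equivariant in the pre-composition step.
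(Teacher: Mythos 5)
Your proof is correct and is precisely the routine verification the paper has in mind: Lemma~\ref{lem homotopy for top} is stated without a written proof (the paper declares it, together with reflexivity, symmetry and transitivity of $\sim$, to be straightforward to verify), and your factorization through the intermediate morphism $(\psi,e)\circ(\varphi',f')$ mirrors exactly the decomposition the paper uses in its proof of the poset analogue, Lemma~\ref{lem homotopy for posets}. All the required checks are carried out correctly --- the $A$-equivariance of $\psi\circ H$ and of $K\circ(\varphi'\times 1)$ (the latter resting, as you note, on the trivial $A$-action on $I$), the endpoint conditions, and the two commuting squares obtained by combining the morphism squares with the homotopy squares --- and the final appeal to transitivity is legitimate since the paper asserts it just before the lemma, so nothing is missing.
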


Lemma~\ref{lem homotopy for top} allows us to form the homotopy category of $\Top^A$.

\begin{definition}
We define the {\em homotopy category} $\scrH(\lTop{}^A)$ of $\lTop{}^A$ as the category whose objects are the same as those of $\lTop{}^A$ and whose morphisms are homotopy classes $[\varphi,f]$ of morphisms $(\varphi, f)$ in $\lTop{}^A$. Lemma~\ref{lem homotopy for top} implies that the composition in $\lTop{}^A$ induces a composition of homotopy classes of morphisms. 
\end{definition}

%%%%%%%%%%%%%% SECTION 5 %%%%%%%%%%%%%%%%%%%%%%%%%%%%%%%%%%%%%%

\section{Functors between $\Poset^A$, $\Simp^A$ and $\Top^A$}
In this section we introduce the functors $\Sigma\colon \lPoset{}^A\to\lSimp{}^A$, $\Pi\colon\lSimp{}^A\to\lPoset{}^A$ and $\mid \cdot \mid \colon \lSimp{}^A \to \lTop{}^A$.

\begin{nothing}{\em The functor $\Sigma\colon \Poset^A\to\Simp^A$.}\quad For any $A$-monomial poset $(X, l)$, we set $\Sigma(X,l)= (X,\Sigma(X), \Sigma(l))$ where $\Sigma(X)$ is the set of nonempty totally ordered finite subsets of $X$, i.e., strictly ascending chains $x_0<\ldots< x_n$ with $n\ge 0$, and where $\Sigma(l): \Sigma(X) \to \bullA$ is the functor given by the elements
\begin{equation*}
(\Sigma(l))(\sigma,\sigma'):= l(\sigmabar,\bar{\sigma'})\in A\,,
\end{equation*}
for any two chains $\sigma, \sigma' \in \Sigma(X)$ with $\sigma\subseteq\sigma'$, where $\sigmabar$ denotes the largest element of $\sigma$. This is well-defined, since $\sigma\subseteq\sigma'$ implies $\sigmabar\le \bar{\sigma'}$. It is now straightforward to verify Equation~(\ref{eqn l conditions simp}), so that $(X,\Sigma(X), \Sigma(l))$ is an $A$-monomial simplicial complex.

Given  a morphism $(f, \lambda)\colon (X,l) \to (Y,m)$   in $\Poset^A$, we define a morphism
$$\Sigma(f, \lambda):=(f,\Sigma(\lambda))\colon \Sigma(X,l)\to\Sigma(Y,m)$$
noting that $f\colon (X,\Sigma(X))\to(Y,\Sigma(Y))$ is a simplicial map and defining the natural transformation $\Sigma(\lambda)$ by the function $\Sigma(\lambda)\colon \Sigma(X)\to A$, $\sigma\mapsto\lambda(\sigmabar)$. This definition satisfies Equation~(\ref{eqn lambda conditions simp}), since, for all $\sigma,\sigma'\in\Sigma(X)$ with $\sigma\subseteq\sigma'$, one has
$\overline{f(\sigma)}=f(\sigmabar)$ and
\begin{equation*}
m(f(\sigmabar),f(\bar{\sigma'}))\cdot\lambda(\sigmabar)=\lambda(\bar{\sigma'})\cdot l(\sigmabar,\bar{\sigma'})\,.
\end{equation*}
Thus, $(\Sigma(f),\Sigma(\lambda))$ is a morphism in $\lSimp{}^A$.

Clearly, $\Sigma$ maps the identity morphism $(\id_X,1)$ of $(X,l)$ to the identity morphism of $(X,\Sigma(X),\Sigma(l))$. 
Finally, suppose that $(f,\lambda)\colon (X,l)\to(Y,m)$ and $(e,\mu)\colon (Y,m)\to(Z,n)$ are morphisms in $\lPoset{}^A$. 
We need to show that $\Sigma((e,\mu)\circ(f,\lambda))=\Sigma(e,\mu)\circ\Sigma(f,\lambda)$. 
But the left hand side equals $(e\circ f, \Sigma((\mu* f)\circ\lambda))$ and the right hand side equals $(e\circ f, (\Sigma(\mu)*f)\circ\Sigma(\lambda))$, and both of the second components are given by the function $\scrS\to A$, $\sigma\mapsto \mu(f(\sigmabar))\cdot\lambda(\sigmabar)$.

The functor $\Sigma\colon \Poset^A\to\Simp^A$ restricts to a functor $\poset^A\to\simp^A$.
\end{nothing}

\begin{lemma}\label{lem Sigma and homotopy}
If $(X,l)$ and $(Y,m)$ are $A$-monomial posets and $(f,\lambda),(f',\lambda')\colon (X,l)\to (Y,m)$ are homotopic in $\Poset^A$ then $\Sigma(f,\lambda)$ and $\Sigma(f',\lambda')$ are homotopic in $\Simp{}^A$. In particular, $\Sigma$ induces functors $\overline{\Sigma}\colon\scrH(\Poset^A)\to\scrH(\Simp^A)$ and $\overline{\Sigma}\colon\scrH(\poset^A)\to\scrH(\simp^A)$ such that the diagrams
\begin{diagram}[75]
   \movevertex(-10,0){\Poset^A} & \Ear[40]{\Sigma} & \movevertex(10,0){\Simp^A} & & & &
		\movevertex(-10,0){\lposet{}^A} & \Ear[40]{\Sigma} & \movevertex(10,0){\lsimp{}^A} &&
   \movearrow(-15,0){\sar} & & \movearrow(15,0){\sar} & & & &
		\movearrow(-15,0){\sar} & & \movearrow(15,0){\sar} &&
   \movevertex(-20,0){\scrH(\Poset^A)} & \Ear[40]{\overline{\Sigma}} & \movevertex(20,0){\scrH(\Simp^A)} & & & &
		\movevertex(-20,0){\scrH(\lposet{}^A)} & \Ear[40]{\overline{\Sigma}} & \movevertex(20,0){\scrH(\lsimp{}^A)} &&
\end{diagram}
commute, where the vertical arrows are the canonical functors.
\end{lemma}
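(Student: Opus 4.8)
The plan is to reduce the statement about the transitive-closure relation $\sim$ to a single comparability step, and then reduce that step---when $X$ is finite---to a chain of \emph{contiguity} steps in $\lSimp{}^A$ via the height-refinement machinery of \ref{noth refining <=}. Concretely, since $\sim$ on posets is generated by the single-step relation $-$ (i.e.\ $(f,\lambda)\le(f',\lambda')$ up to swapping the two morphisms), and since $\Sigma$ visibly sends a $\sim$-chain to a $\sim$-chain provided it sends each link to a $\sim$-pair, it suffices to prove: if $(f,\lambda)\le(f',\lambda')$ in $\lPoset{}^A$ then $\Sigma(f,\lambda)\sim\Sigma(f',\lambda')$ in $\lSimp{}^A$. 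By symmetry of $\sim$ the case $(f',\lambda')\le(f,\lambda)$ then follows, and transitivity across the whole chain gives the general homotopy claim.

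For the core step I would first reduce to the case where $X$ is finite. Homotopy of simplicial maps is detected on finite subcomplexes---any relevant simplex $\sigma\in\Sigma(X)$ lives in the finite subposet it spans---so one may restrict attention to the finite subposet generated by the chains involved, or argue directly that the refinement construction only ever uses finitely many elements at a time. Assuming $X$ finite, apply \ref{noth refining <=}(c) to the inequality $(f,\lambda)\le(f',\lambda')$ to obtain the refining chain
\begin{equation*}
(f,\lambda)=(f_0,\lambda_0)\le\cdots\le(f_{n+1},\lambda_{n+1})=(f',\lambda')
\end{equation*}
in $\lPoset{}^A$, where consecutive pairs $(f_i,\lambda_i)$ and $(f_{i+1},\lambda_{i+1})$ differ only on the single height layer $\hgt(x)=n-i$, as recorded in \eqref{eqn f i explicit} and \eqref{eqn lambda i explicit}. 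The goal is then to show that for each $i$ the images $\Sigma(f_i,\lambda_i)$ and $\Sigma(f_{i+1},\lambda_{i+1})$ are \emph{contiguous} in $\lSimp{}^A$, i.e.\ that they satisfy the single-step relation $-$; chaining these contiguities yields $\Sigma(f,\lambda)\sim\Sigma(f',\lambda')$.

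The key observation making each link a contiguity is that $f_i$ and $f_{i+1}$ agree except on elements of a single height $n-i$, and by \ref{noth refining <=}(a) no chain $\sigma\in\Sigma(X)$ contains two elements of the same height. Hence for any chain $\sigma=\{x_0<\cdots<x_k\}$, the functions $f_i$ and $f_{i+1}$ disagree on at most one vertex of $\sigma$, and where they do disagree we have $f_i(x)=f(x)\le f'(x)=f_{i+1}(x)$; one then checks that $f_i(\sigma)\cup f_{i+1}(\sigma)$ is again a chain (hence a simplex of $\Sigma(Y)$), establishing $f_i-f_{i+1}$ as simplicial maps. The main obstacle---and the part demanding genuine care---is verifying the fibered condition \eqref{eqn - condition in simp}: one must compute the natural transformation $\iota_{f_i-f_{i+1},\Sigma(m)}$ on the largest element $\sigmabar$ of each chain and confirm that $\iota_{f_i-f_{i+1},\Sigma(m)}\circ\Sigma(\lambda_i)=\Sigma(\lambda_{i+1})$, i.e.\ that it reproduces exactly the passage from $\lambda_i$ to $\lambda_{i+1}$. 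Since $\Sigma(\lambda_j)(\sigma)=\lambda_j(\sigmabar)$, this reduces to a computation at the single top vertex $\sigmabar$, and the relevant identity there is precisely the fibered inequality $m(f(\sigmabar),f'(\sigmabar))\cdot\lambda(\sigmabar)=\lambda'(\sigmabar)$ from \eqref{eqn le condition in poset} (used when $\hgt(\sigmabar)=n-i$, with the two degenerate cases $m(\cdot,\cdot)=1$ handled as in \ref{noth refining <=}). Once these per-layer contiguities are in hand, the naturality of $\Sigma$ gives the commuting squares, and the induced functors $\overline{\Sigma}$ on the homotopy categories are then automatic from Definition~\ref{def homotopy cat for posets} and the corresponding definition for simplicial complexes.
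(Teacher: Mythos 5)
Your core argument coincides with the paper's proof step for step: reduce by symmetry and transitivity to a single comparability $(f,\lambda)\le(f',\lambda')$; apply the height refinement of \ref{noth refining <=} to obtain the chain (\ref{eqn refined <=}); observe that, since no totally ordered subset of $X$ contains two elements of equal height, $f_i$ and $f_{i+1}$ differ on at most one vertex of each simplex $\sigma$, so that $f_i(\sigma)\cup f_{i+1}(\sigma)$ is again a chain and $f_i-f_{i+1}$ holds in $\lSimp{}$; and check the fibered condition (\ref{eqn - condition in simp}) at the top vertex $\sigmabar$, where (because the maximal elements of $f_i(\sigma)$, $f_{i+1}(\sigma)$ and $f_i(\sigma)\cup f_{i+1}(\sigma)$ are $f_i(\sigmabar)$, $f_{i+1}(\sigmabar)$, $f_{i+1}(\sigmabar)$) it reduces to $m(f_i(\sigmabar),f_{i+1}(\sigmabar))\cdot\lambda_i(\sigmabar)=\lambda_{i+1}(\sigmabar)$, i.e.\ to $(f_i,\lambda_i)\le(f_{i+1},\lambda_{i+1})$, which in the nontrivial layer $\hgt(\sigmabar)=n-i$ is exactly (\ref{eqn le condition in poset}). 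This is precisely the published argument, and it is correct.

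The one place you depart from the paper is your preliminary reduction of general $X$ to finite $X$, and as sketched that step is a genuine gap. Contiguity is a simplexwise condition and hence is detected on finite subcomplexes, but homotopy is its \emph{transitive closure}: it demands intermediate morphisms $(f_i,\lambda_i)\colon (X,\Sigma(X),\Sigma(l))\to(Y,\Sigma(Y),\Sigma(m))$ defined on the whole complex, linked by a single finite chain of contiguities. Restricting to the finite subposet spanned by the simplices involved yields chains of contiguities whose lengths grow with the subposet and whose intermediate maps need not be compatible as the subposet varies, and there is no gluing principle assembling these local homotopies into a global one. Your alternative remark that ``the refinement construction only ever uses finitely many elements at a time'' is also not true in the relevant sense: the very definition of $f_i$ in \ref{noth refining <=}(b) depends on the global maximal height $n$ of $X$, which need not exist, and for unbounded height the layer-by-layer chain would be infinite. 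To be fair, the paper's own proof silently invokes \ref{noth refining <=}, whose hypotheses require $X$ finite (bounded height is what is actually used), so for finite or bounded-height posets your argument is on exactly the same footing as the published one; but if you want the statement for arbitrary $A$-monomial posets you must either impose that hypothesis explicitly or supply a genuinely different argument, not the finite-subcomplex reduction.
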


\begin{proof}
We may assume that $(f,\lambda)\le (f', \lambda')$. 
Using the construction and notation in \ref{noth refining <=} together with (\ref{eqn refined morphisms}) and (\ref{eqn refined <=}), it suffices to show that $\Sigma(f_i,\lambda_i)-\Sigma(f_{i+1},\lambda_{i+1})$, for $i=0,\ldots,n$. So fix $i\in\{0,\ldots,n\}$. In order to prove $(f_i,\Sigma(\lambda_i))-(f_{i+1},\Sigma(\lambda_{i+1}))$, we need to show that (i) $f_i-f_{i+1}$ as morphism in $\Hom_{\Simp}((X,\Sigma(X)), (Y,\Sigma(Y)))$,  and (ii) $\iota_{f_i-f_{i+1},\Sigma(m)}\circ \Sigma(\lambda_i)=\Sigma(\lambda_{i+1})$.

\smallskip
(i) Let $\sigma=\{x_0,\ldots, x_k\}\in\Sigma(X)$ with $x_0<\cdots<x_k$. If $\sigma$ doesn't contain an element of height $n-i$ then $f_i=f_{i+1}$ by (\ref{eqn f i explicit}). And if $x_j\in\sigma$ has height $n-i$ then 
\begin{equation*}
f_i(\sigma)\cup f_{i+1}(\sigma) = \{f(x_0), \ldots, f(x_j), f'(x_j),\ldots,f'(x_k)\}\in \Sigma(Y)\,,
\end{equation*}
since $f(x_0)\le \cdots \le f(x_j)\le f'(x_j)\le \cdots \le f'(x_k)$.

\smallskip
(ii) Let $\sigma\in \Sigma(X)$ and set $x:=\sigmabar$. Since the maximal elements of $f_i(\sigma)$, $f_{i+1}(\sigma)$ and $f_i(\sigma)\cup f_{i+1}(\sigma)$ are $f_{i}(x)$, $f_{i+1}(x)$, and $f_{i+1}(x)$, respectively, by the definition of $\Sigma(m)$, $\Sigma(\lambda_i)$ and $\Sigma(\lambda_{i+1})$, we need to show that 
\begin{equation*}
m(f_i(x),f_{i+1}(x))\cdot \lambda_i(x)  = m(f_{i+1}(x),f_{i+1}(x))\cdot \lambda_{i+1}(x)\,.
\end{equation*}
But this follows immediately from $m(f_{i+1}(x), f_{i+1}(x))=1$ and $(f_i,\lambda_i)\le (f_{i+1},\lambda_{i+1})$ in $\Hom_{\Poset^A}((X,l),(Y,m))$.	
\end{proof}

\begin{lemma}
{\rm (a)} The functor $\Sigma \colon \Poset^A \to \Simp^A$ is faithful.

\smallskip
{\rm (b)} The functor $\overline{\Sigma} \colon \scrH(\Poset^A) \to \scrH(\Simp^A)$ is faithful.
\end{lemma}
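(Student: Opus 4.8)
The plan is to handle the two parts separately; (a) is a direct verification and (b) carries all the content.

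For part (a), first I would observe that $\Sigma$ leaves the underlying map untouched: $\Sigma(f,\lambda)=(f,\Sigma(\lambda))$, where the simplicial map has the same vertex function $f\colon X\to Y$ and $\Sigma(\lambda)(\sigma)=\lambda(\sigmabar)$. Hence if $\Sigma(f,\lambda)=\Sigma(f',\lambda')$ then already $f=f'$ as functions $X\to Y$, and $\Sigma(\lambda)=\Sigma(\lambda')$. Evaluating the latter on the singleton chains $\{x\}$, whose largest element is $x$, gives $\lambda(x)=\Sigma(\lambda)(\{x\})=\Sigma(\lambda')(\{x\})=\lambda'(x)$ for every $x\in X$, so $\lambda=\lambda'$ and $(f,\lambda)=(f',\lambda')$. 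Thus $\Sigma$ is injective on each hom-set, i.e.\ faithful.

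For part (b), since $\overline{\Sigma}[f,\lambda]=[\Sigma(f,\lambda)]$, faithfulness of $\overline{\Sigma}$ is precisely the statement that $\Sigma$ \emph{reflects} homotopy: if $\Sigma(f,\lambda)\sim\Sigma(f',\lambda')$ in $\Simp^A$ then $(f,\lambda)\sim(f',\lambda')$ in $\Poset^A$. Lemma~\ref{lem Sigma and homotopy} supplies the opposite implication, so the two homotopy relations will then coincide on the image of $\Sigma$. The heart of the argument is a single-step computation, which I would carry out first: suppose $\Sigma(f,\lambda)-\Sigma(f',\lambda')$ is a \emph{single} contiguity. Read on the singleton $\{x\}$ inside $(Y,\Sigma(Y),\Sigma(m))$, the contiguity condition (\ref{eqn - condition in simp}) forces $f(x)$ and $f'(x)$ to be comparable and reads
\[
m(f(x),h(x))\cdot\lambda(x)=m(f'(x),h(x))\cdot\lambda'(x),\qquad h(x):=\max(f(x),f'(x)).
\]
Contiguity on an edge $x<x'$ forces $\{f(x),f(x'),f'(x),f'(x')\}$ to be a chain, from which one checks that $h\colon X\to Y$ is monotone with $f\le h\ge f'$. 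Setting $\eta(x):=m(f(x),h(x))\cdot\lambda(x)$, the cocycle identities (\ref{eqn l conditions poset}) together with the morphism conditions (\ref{eqn lambda conditions poset}) for $(f,\lambda)$ and $(f',\lambda')$ show that $(h,\eta)$ is a morphism in $\Poset^A$, and the displayed equality with (\ref{eqn le condition in poset}) shows $(f,\lambda)\le(h,\eta)\ge(f',\lambda')$. So a single contiguity between two objects \emph{in the image of $\Sigma$} already forces the corresponding poset morphisms to be homotopic.

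The hard part will be that a homotopy $\Sigma(f,\lambda)\sim\Sigma(f',\lambda')$ is only the transitive closure of contiguities, hence passes through intermediate morphisms $(g,\nu)$ whose vertex maps $g\colon X\to Y$ are arbitrary simplicial maps that need \emph{not} be order-preserving (a simplicial map of order complexes may reverse order along a chain). For such $g$ the pointwise maximum is no longer monotone, so the single-step construction does not chain up directly; this is the one genuinely delicate point. To overcome it I would pass to chains: although $g$ is not monotone on $X$, the assignment $\sigma\mapsto g(\sigma)$ is a monotone map of the chain posets $\Sigma(X)\to\Sigma(Y)$, and the monotone ``top vertex'' map $\Sigma(X)\to X$, $\sigma\mapsto\sigmabar$ — the very map built into the definition of $\Sigma$ — lets one transport the contiguity fence up to the chain posets, apply the max-construction there where all maps are monotone, and then descend back to $(X,l)$. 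In the finite case the height refinement of \ref{noth refining <=} provides exactly the bookkeeping needed to split the resulting comparabilities into single steps carrying the correct $A$-valued data; everything outside this reduction is the cocycle computation already indicated.
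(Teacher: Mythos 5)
Your part (a) and the single-contiguity computation in part (b) coincide with the paper's own proof. Evaluating your formula $\eta(x)=m(f(x),h(x))\cdot\lambda(x)$ in the two cases $f(x)\le f'(x)$ and $f'(x)<f(x)$, using the contiguity condition (\ref{eqn - condition in simp}) on the singleton $\{x\}$, gives $\eta(x)=\lambda'(x)$ and $\eta(x)=\lambda(x)$ respectively, so your $(h,\eta)$ is literally the paper's $(f\cup f',\lambda\cup\lambda')$, and the two-sided comparison $(f,\lambda)\le(h,\eta)\ge(f',\lambda')$ is verified exactly as in the paper. Up to this point you have reproduced the published argument, and your verification of monotonicity of $h$ from contiguity on edges $x<x'$ is correct.

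The subtlety you flag in your final paragraph is genuine: a homotopy $\Sigma(f,\lambda)\sim\Sigma(f',\lambda')$ is only a fence of contiguities whose intermediate morphisms $(g,\nu)\colon (X,\Sigma(X),\Sigma(l))\to(Y,\Sigma(Y),\Sigma(m))$ need not lie in the image of $\Sigma$ — and it is worth noting that the paper's proof is silent on this point too, treating only a single contiguity between two morphisms that both come from $\Poset^A$, exactly as your core computation does. However, your proposed repair does not close this gap. Transporting the fence to the chain posets via $\Pi$ and the top-vertex map produces a fence between the monotone maps $\Sigma(X)\to Y$, $\sigma\mapsto f(\sigmabar)$ and $\sigma\mapsto f'(\sigmabar)$; but to ``descend back to $(X,l)$'' you would need to precompose with a monotone map $X\to\Sigma(X)$ splitting $\sigma\mapsto\sigmabar$, and no such section exists in general: $x\mapsto\{x\}$ is \emph{not} order-preserving into $(\Sigma(X),\subseteq)$, and if $X$ contains two incomparable elements $a,b$ with a common upper bound $c$, any monotone section $s$ with top-vertex $s(x)$ equal to $x$ would have to satisfy $s(a)\cup s(b)\subseteq s(c)$, forcing $s(c)$ to contain the incomparable pair $a,b$, which contradicts $s(c)\in\Sigma(X)$. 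Likewise, the appeal to \ref{noth refining <=} is misdirected: that construction refines a relation $(f,\lambda)\le(f',\lambda')$ in $\Poset^A$ into elementary steps and is the tool for showing that $\Sigma$ \emph{preserves} homotopy (Lemma~\ref{lem Sigma and homotopy}); it gives no purchase on \emph{reflecting} a contiguity fence that passes through simplicial maps whose vertex functions are not order-preserving. So your write-up matches the paper on everything the paper actually proves and correctly identifies the point the paper elides, but the sketched reduction in your last paragraph is not a proof, and its key descent step fails as stated.
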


\begin{proof}
(a) Let $(f,\lambda),(f',\lambda')\colon (X,l) \to (Y,m)$ be in $\Poset^A$ with $\Sigma(f,\lambda)= \Sigma(f',\lambda')$. Then, for every $x \in X$, we have
$$f(x)= \Sigma(f)(x)= \Sigma(f')(x)= f'(x)$$
and
$$\lambda(x)= \Sigma(\lambda)(\{x\})= \Sigma(\lambda')(\{x\})= \lambda'(x).$$
Thus, $(f,\lambda)=(f',\lambda')$.

\smallskip
(b) Let $(X,l)$ and $(Y,m)$ be $A$-monomial posets. 
Let $[f,\lambda], [f',\lambda']\colon (X,l) \to (Y,m)$ be morphisms in $\scrH(\lposet{}^A)$ such that 
$\overline{\Sigma}([f,\lambda])-\overline{\Sigma}([f',\lambda'])$. We want to show that $(f,\lambda)\sim(f',\lambda')$. Note that  since 
$\overline{\Sigma}([f,\lambda])-\overline{\Sigma}([f',\lambda'])$, for any $x \in X$ we have $\overline{\Sigma(f)}(x) \cup \overline{\Sigma(f')}(x)= f(x)\cup f'(x) \in \Sigma(Y)$. So either $f(x) \leq f'(x)$ or $f'(x)< f(x)$. Thus, we can define a map  
$(f\cup f', \lambda\cup\lambda')\colon (X,l) \to (Y,m)$ in $\lPoset{}^A$ such that 
\begin{equation*}
(f\cup f')(x):= \begin{cases} f'(x) & \text{if $f(x) \leq f'(x)$,} \\
f(x) & \text{if $f'(x)< f(x)$}
\end{cases}
\end{equation*}
and
\begin{equation*}
(\lambda\cup \lambda')(x):= \begin{cases} \lambda'(x) & \text{if $f(x) \leq f'(x)$,} \\
\lambda(x) & \text{if $f'(x)< f(x)$}
\end{cases}
\end{equation*}
for any $x \in X$. Now we show that $(f\cup f', \lambda\cup\lambda')$ is in $\lPoset{}^A$. Since $f,f' \in \lPoset{}$, we have $f\cup f' \in \lPoset{}$.

%Given $x \leq y$ in $X$, we need to show that $(f\cup f')(x) \leq (f\cup f')(y)$. We have four cases to consider.
%\smallskip(i) If $f(x) \leq f'(x)$ and $f(y) \leq f'(y)$ then we get
%$$(f\cup f')(x) = f'(x) \leq f'(y)= (f\cup f')(y).$$

%\smallskip(ii) If $f'(x) < f(x)$ and $f'(y) < f(y)$ then
%$$(f\cup f')(x)= f(x) \leq f(y)= (f\cup f')(y).$$

%\smallskip(iii) If $f'(x)< f(x)$ and $f(y) \leq f'(y)$ then
%$$(f\cup f')(x)= f(x) \leq f(y) \leq f'(y)= (f\cup f')(y).$$

%\smallskip(iv) If $f(x) \leq f'(x)$ and $f'(y) < f(y)$ then
%$$(f\cup f')(x) = f'(x) \leq f'(y) < f(y)= (f\cup f')(y).$$

Now we show that $\lambda \cup \lambda'\colon l \to m \circ f\cup f'$ is a natural transformation. Given $x \le y$ in $X$, we need to show that 
$$(\lambda\cup\lambda')(y)l(x,y)=m((f\cup f')(x), (f\cup f')(y))(\lambda\cup\lambda')(x).$$
We have four cases to consider.

\smallskip
(i) If $f(x) \le f'(x)$ and $f(y) \le f'(y)$ then
\begin{align*}
(\lambda\cup\lambda')(y)l(x,y) &=  \lambda'(y)l(x,y)=m(f'(x), f'(y))\lambda'(x)\\
&= m((f \cup f')(x), (f\cup f')(y))(\lambda\cup\lambda')(x).
\end{align*}

\smallskip(ii) The case $f'(x)< f(x)$ and $f'(y)< f(y)$ is similar to part (i).

%If $f'(x)< f(x)$ and $f'(y)< f(y)$ then

%$$(\lambda\cup\lambda')(y)l(x,y)= \lambda(y)l(x, y)= m(f(x), f(y))\lambda(x)$$
%$$=m((f\cup f')(x), (f\cup f')(y))(\lambda\cup\lambda')(x).$$

\smallskip(iii) If $f(x)\le f'(x)$ and $f'(y) < f(y)$ then
\begin{align*}
(\lambda\cup\lambda')(y)l(x,y) &=  \lambda(y)l(x,y)=m(f(x), f(y))\lambda(x)\\
&=  m(f'(x), f(y))m(f(x), f'(x))\lambda(x)\\
&=  m(f'(x), f(y))\Sigma(m)(f(x), f(x)\cup f'(x))\Sigma(\lambda)(x)\\
&=  m(f'(x), f(y))\Sigma(m)( f'(x), f(x)\cup f'(x))\Sigma(\lambda')(x)\\
%$$= m(f'(x), f(y))m(f'(x), f'(x))\lambda'(x)$$
&=  m(f'(x), f(y))\lambda'(x)=m((f\cup f')(x), (f\cup f')(y))(\lambda\cup\lambda')(x)\,.\\
\end{align*}

\smallskip(iv) The case $f'(x)< f(x)$ and $f'(y)< f(y)$ is similar to part (iii).

%If $f'(x)< f(x)$ and $f(y) \le f'(y)$ then
%$$(\lambda\cup\lambda')(y)l(x,y)= \lambda'(y)l(x,y)= m(f'(x), f'(y))\lambda'(x)$$
%$$=m(f(x), f'(y))m(f'(x), f(x))\lambda'(x)$$
%$$=m(f(x), f'(y))\Sigma(m)(f'(x), f(x)\cup f'(x))\Sigma(\lambda')(x)$$
%$$= m(f(x), f'(y))\Sigma(m)(f(x), f(x)\cup f'(x))\Sigma(\lambda)(x)$$
%$$= m(f(x), f'(y))m(f(x), f(x))\lambda(x)=m(f(x), f'(y))\lambda(x)$$
%$$= m((f\cup f')(x), (f\cup f')(y))(\lambda\cup\lambda')(x).$$

\smallskip
Thus, $(f\cup f', \lambda\cup\lambda')\colon (X,l) \to (Y,m)$ is a morphism in $\lPoset{}^A$. Next we show that $(f,\lambda) \le (f\cup f', \lambda\cup\lambda')$.

Let $x \in X$. We have two cases to consider $f(x)\le f'(x)$ and $f'(x)< f(x)$. If $f(x) \le f'(x)$ then we obviously have $f \le f\cup f'$ and moreover
\begin{align*}
\iota_{f- f\cup f', m}(x) \circ \lambda(x) &= m(f(x), (f\cup f')(x))\lambda(x)= m(f(x), f'(x))\lambda(x)\\
&=\Sigma(m)(f(x), f(x)\cup f'(x))\Sigma(\lambda)(x)= \Sigma(\lambda')(x)\\
&=\lambda'(x)= (\lambda\cup\lambda')(x)\,.
\end{align*}
This means that $(f,\lambda) \le (f\cup f', \lambda\cup\lambda')$ in this case. The same result follows similarly in the case $f'(x)< f(x)$.
%If $f'(x)< f(x)$ then we obviously have
%$f' \le f\cup f'$. Moreover, we have 
%$$\iota_{f'- f\cup f', m}(x) \circ \lambda(x) = m(f'(x), (f\cup f')(x))\lambda(x)= m(f'(x),f(x))\lambda(x)$$
%$$=\Sigma(m)(f'(x), f(x)\cup f'(x))\Sigma(\lambda)(x)= \Sigma(\lambda')(x)=\lambda'(x)= (\lambda\cup\lambda')(x).$$
The same way one can prove $(f', \lambda') \le (f\cup f', \lambda\cup\lambda')$.  Altogether we obtain $(f,\lambda)\sim(f',\lambda')$ and the proof is complete.
\end{proof}

As a consequence o the above Lemma, also the functors $\Sigma \colon \poset^A \to \simp^A$ and $\overline{\Sigma} \colon \scrH(\poset^A) \to \scrH(\simp^A)$ are faithful.

\begin{nothing}{\em The functor $\Pi\colon \lSimp{}^A\to\lPoset{}^A$.}\quad For any $A$-monomial simplicial complex $(S,\scrS,l)$ we define $\Pi(S,\scrS,l):=(\scrS,l)$, an $A$-monomial poset, where $\scrS$ is viewed as partially ordered set via inclusion of subsets of $S$. 
For any morphism $(f,\lambda)\colon (S,\scrS,l)\to(T,\scrT,m)$ in $\lSimp{}^A$ we define $\Pi(f,\lambda):=(f,\lambda)$, where $f$ on the right hand side denotes the map $f\colon \scrS\to\scrT$, $\sigma\mapsto f(\sigma),$ induced by $f\colon S\to T$. 
It is straightforward to check that this defines a functor $\Pi\colon\lSimp{}^A\to\lPoset{}^A$ which restricts to a functor $\Sigma\colon \lsimp{}^A\to\lposet{}^A$.
\end{nothing}

\begin{lemma}
	If $(f,\lambda),(f',\lambda')\colon (S,\scrS,l)\to(T,\scrT,m)$ are homotopic in $\lSimp{}^A$ then $\Pi(f,\lambda)$ and $\Pi(f',\lambda')$ are homotopic in $\lPoset{}^A$. In particular, $\Pi$ induces a functor $\overline{\Pi}\colon\scrH(\lSimp{}^A)\to\scrH(\lPoset{}^A)$ which restricts to a functor  $\overline{\Pi}\colon\scrH(\lsimp{}^A)\to\scrH(\lposet{}^A)$ such that the diagrams
	\begin{diagram}[75]
		\movevertex(-10,0){\lSimp{}^A} & \Ear[40]{\Pi} & \movevertex(10,0){\lPoset{}^A} & & & & 
		\movevertex(-10,0){\lsimp{}^A} & \Ear[40]{\Pi} & \movevertex(10,0){\lposet{}^A} &&
		\movearrow(-15,0){\sar} & & \movearrow(15,0){\sar} & & & & 
		\movearrow(-15,0){\sar} & & \movearrow(15,0){\sar} &&
		\movevertex(-20,0){\scrH(\lSimp{}^A)} & \Ear[40]{\overline{\Pi}} & \movevertex(20,0){\scrH(\lPoset{}^A)} & & & & 
		\movevertex(-20,0){\scrH(\lsimp{}^A)} & \Ear[40]{\overline{\Pi}} & \movevertex(20,0){\scrH(\lposet{}^A)} &&
	\end{diagram}
	commute, where the vertical arrows are the canonical functors.
\end{lemma}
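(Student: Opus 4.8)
The plan is to reduce to a single contiguity step and then build an explicit common upper bound in $\lPoset{}^A$ that dominates both images. First, since homotopy in $\lSimp{}^A$ is by definition the transitive closure $\sim$ of the contiguity relation $-$, and since $\Pi$ acts as the identity on the underlying maps and label functions, it suffices to prove that a single contiguity $(f,\lambda) - (f',\lambda')$ in $\lSimp{}^A$ forces $\Pi(f,\lambda)\sim\Pi(f',\lambda')$ in $\lPoset{}^A$; the general statement then follows by composing chains and using transitivity of $\sim$ on posets. So I would assume $f-f'$ in $\lSimp{}$ and that $(\ref{eqn - condition in simp})$ holds, and regard $f,f'$ as poset morphisms $\scrS\to\scrT$ and $\lambda,\lambda'$ as the associated functions $\scrS\to A$, so that $(f,\lambda),(f',\lambda')\in\Hom_{\lPoset{}^A}((\scrS,l),(\scrT,m))$.

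Next I would introduce the join $g:=f\cup f'\colon\scrS\to\scrT$, $g(\sigma):=f(\sigma)\cup f'(\sigma)$. Contiguity guarantees $g(\sigma)\in\scrT$, and $g$ is monotone because $\sigma\subseteq\sigma'$ gives $f(\sigma)\subseteq f(\sigma')$ and $f'(\sigma)\subseteq f'(\sigma')$; moreover $f\le g$ and $f'\le g$ pointwise in the inclusion order. The key structural observation is that extending a monomial morphism along $\le$ is automatic: for any poset morphism $g$ with $f\le g$, the composite of natural transformations $\iota_{f\le g,m}\circ\lambda\colon l\to m\circ g$ is again a natural transformation, so $(g,\iota_{f\le g,m}\circ\lambda)$ is automatically a morphism in $\lPoset{}^A$ satisfying $(f,\lambda)\le(g,\iota_{f\le g,m}\circ\lambda)$ by the very definition of $\le$. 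I would therefore set $\nu(\sigma):=m(f(\sigma),g(\sigma))\cdot\lambda(\sigma)$, which makes $(g,\nu)$ a morphism with $(f,\lambda)\le(g,\nu)$.

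The crucial point, and the one place where the monomial compatibility $(\ref{eqn - condition in simp})$ enters, is that the analogous construction starting from $(f',\lambda')$ yields the \emph{same} labelling: since $g(\sigma)=f(\sigma)\cup f'(\sigma)$, equation $(\ref{eqn - condition in simp})$ reads $m(f(\sigma),g(\sigma))\cdot\lambda(\sigma)=m(f'(\sigma),g(\sigma))\cdot\lambda'(\sigma)$, so the value $\nu(\sigma)$ also equals $m(f'(\sigma),g(\sigma))\cdot\lambda'(\sigma)$ for every $\sigma$, and hence $(f',\lambda')\le(g,\nu)$ as well. Consequently $(f,\lambda)-(g,\nu)-(f',\lambda')$ in $\lPoset{}^A$, so $\Pi(f,\lambda)\sim\Pi(f',\lambda')$, which is the homotopy-preservation statement.

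Finally, the \lq\lq in particular\rq\rq\ assertions are formal. Because $\Pi$ is a functor $\lSimp{}^A\to\lPoset{}^A$ carrying homotopic morphisms to homotopic ones, it descends to $\overline{\Pi}\colon\scrH(\lSimp{}^A)\to\scrH(\lPoset{}^A)$, $[f,\lambda]\mapsto[\Pi(f,\lambda)]$, well defined on homotopy classes and compatible with the induced composition by Lemma~\ref{lem homotopy for simp} and Lemma~\ref{lem homotopy for posets}. Since $S$ finite implies $\scrS$ finite, $\Pi$ restricts to $\lsimp{}^A\to\lposet{}^A$, and hence $\overline{\Pi}$ restricts to the finite homotopy categories; commutativity of both squares is immediate, as each path sends $(f,\lambda)$ to $[\Pi(f,\lambda)]$. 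The main obstacle is the middle step: producing the common upper bound $(g,\nu)$ and checking that the two candidate labellings coincide, which is exactly where $(\ref{eqn - condition in simp})$ is required.
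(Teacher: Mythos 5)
Your proposal is correct and takes essentially the same route as the paper: both construct the common upper bound $(f\cup f',\nu)$ with $\nu(\sigma)=m(f(\sigma),f(\sigma)\cup f'(\sigma))\cdot\lambda(\sigma)=m(f'(\sigma),f(\sigma)\cup f'(\sigma))\cdot\lambda'(\sigma)$ (the paper's $\eta_{\lambda,\lambda'}$), using Equation~(\ref{eqn - condition in simp}) to see that the two labellings agree, and conclude $(f,\lambda)-(f\cup f',\nu)-(f',\lambda')$. The only difference is cosmetic: you obtain naturality of $\nu$ formally, as the composite $\iota_{f\le f\cup f',m}\circ\lambda$ of natural transformations, whereas the paper verifies the same compatibility condition by direct computation.
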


\begin{proof}
We may assume that $(f,\lambda)-(f', \lambda')$ in $\lSimp{}^A$. We define the pair $(f\cup f', \eta_{\lambda,\lambda'})\colon \Pi(S,\scrS,l) \to \Pi(T,\scrT,m)$ where 
$f\cup f'\colon \scrS \to \scrT$ is given by $(f\cup f')(\sigma)=f(\sigma)\cup f'(\sigma)$  and 
$\eta_{\lambda,\lambda'}\colon l \to m\circ f \cup f'$ is given by 
$$\eta_{\lambda,\lambda'}(\sigma)= m(f'(\sigma), f(\sigma)\cup f'(\sigma))\cdot\lambda'(\sigma)$$ for any $\sigma \in \scrS$. We want to show that $(f\cup f', \eta_{\lambda,\lambda'})$ is in $\lposet{}^A$ and $(f,\lambda)-(f\cup f', \eta_{\lambda,\lambda'})-(f',\lambda')$ in $\lposet{}^A$. Obviuosly, $f\cup f'\colon \scrS \to \scrT$ is a map of posets. For any  $\sigma, \sigma' \in \scrS$ with $\sigma \subseteq \sigma'$, we have 
\begin{align*}
\eta_{\lambda,\lambda'}(\sigma')\cdot l(\sigma, \sigma')&=m(f'(\sigma'), f(\sigma')\cup f'(\sigma'))\cdot \lambda'(\sigma')\cdot  l(\sigma, \sigma')\\
&=m(f'(\sigma'), f(\sigma')\cup f'(\sigma'))\cdot m(f'(\sigma), f'(\sigma'))\cdot \lambda'(\sigma)\\
%&=m(f'(\sigma), f(\sigma')\cup f'(\sigma'))\cdot \lambda'(\sigma)\\
&=m(f(\sigma)\cup f'(\sigma), f(\sigma')\cup f'(\sigma'))\cdot m(f'(\sigma), f(\sigma)\cup f'(\sigma))\cdot \lambda'(\sigma)\\
&= m(f(\sigma)\cup f'(\sigma), f(\sigma')\cup f'(\sigma'))\cdot \eta_{\lambda,\lambda'}(\sigma).
\end{align*}
and this implies that $\eta_{\lambda,\lambda'}\colon l \to m\circ f \cup f'$ is a natural transformation. Now we will show that (i) $(f,\lambda)\le (f\cup f', \eta_{\lambda,\lambda'})$  and (ii) $(f\cup f', \eta_{\lambda,\lambda'})\ge(f',\lambda')$.

\smallskip
(i) Obviously, $f \le f\cup f'$. We get $\iota_{f\le f\cup f', m}\circ\lambda= \eta_{\lambda,\lambda'}$
because from  Equation (\ref{eqn - condition in simp}) we have
\begin{align*}
\iota_{f- f\cup f', m}(\sigma)\cdot \lambda(\sigma) &= m(f(\sigma), f(\sigma)\cup f'(\sigma))\cdot \lambda(\sigma)\\
&= m(f'(\sigma), f(\sigma)\cup f'(\sigma))\cdot \lambda'(\sigma) = \eta_{\lambda,\lambda'}(\sigma)
\end{align*}
for any $\sigma \in \scrS$.
So $(f, \lambda) \le (f\cup f', \eta_{\lambda,\lambda'})$.

\smallskip
(ii) The case $f' \le f\cup f'$ follows similarly from the definition of $\iota_{f'- f\cup f', m}$.

%Obviously, $f' \le f\cup f'$.  We get $\iota_{f'- f\cup f', m}\circ \lambda'= \eta_{\lambda,\lambda'}$
%because we have
%$$\iota_{f'- f\cup f', m}(\sigma)\cdot \lambda'(\sigma)=m(f'(\sigma), f(\sigma)\cup f'(\sigma))\cdot \lambda'(\sigma)=\eta_{\lambda,\lambda'}(\sigma)$$
%for any $\sigma \in \scrS$. So $(f', \lambda') \le (f\cup f', \eta_{\lambda,\lambda'})$.

\smallskip
Altogether we get $(f,\lambda)-(f\cup f', \eta_{\lambda,\lambda'})-(f',\lambda')$ and so $(f,\lambda) \sim (f',\lambda')$.
\end{proof}

\begin{lemma}
{\rm (a)} The functor $\Pi\colon \lSimp{}^A \to \lPoset{}^A$ is faithful.

\smallskip
{\rm (b)} The functor $\overline{\Pi} \colon \scrH(\lSimp{}^A) \to \scrH(\lPoset{}^A)$ is faithful.
\end{lemma}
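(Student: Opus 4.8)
For part (a) I would argue directly, using that the functor $\Pi$ leaves the $A$-valued datum $\lambda\colon\scrS\to A$ untouched and only replaces the simplicial map $f\colon S\to T$ by the induced poset map $\scrS\to\scrT$, $\sigma\mapsto f(\sigma)$. Hence if $\Pi(f,\lambda)=\Pi(f',\lambda')$ then $\lambda=\lambda'$ and $f(\sigma)=f'(\sigma)$ for all $\sigma\in\scrS$; evaluating the latter on the singletons $\sigma=\{s\}$ gives $\{f(s)\}=\{f'(s)\}$, so $f(s)=f'(s)$ for every vertex $s\in S$ and therefore $f=f'$. Thus $(f,\lambda)=(f',\lambda')$, and the same computation applies to the finite versions.

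For part (b) the heart of the matter is the following \emph{span lemma}, which I would establish first: if $(g,\kappa)\in\Hom_{\lPoset{}^A}((\scrS,l),(\scrT,m))$ satisfies $\Pi(f,\lambda)\le (g,\kappa)\ge \Pi(f',\lambda')$, then $(f,\lambda)-(f',\lambda')$ in $\lSimp{}^A$. Indeed, the two inequalities give $f(\sigma)\subseteq g(\sigma)$ and $f'(\sigma)\subseteq g(\sigma)$ for every $\sigma\in\scrS$, whence $f(\sigma)\cup f'(\sigma)\subseteq g(\sigma)\in\scrT$ forces $f(\sigma)\cup f'(\sigma)\in\scrT$, i.e.\ $f-f'$. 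For the monomial datum, (\ref{eqn le condition in poset}) yields $m(f(\sigma),g(\sigma))\cdot\lambda(\sigma)=\kappa(\sigma)=m(f'(\sigma),g(\sigma))\cdot\lambda'(\sigma)$; expanding both sides along $f(\sigma),f'(\sigma)\subseteq f(\sigma)\cup f'(\sigma)\subseteq g(\sigma)$ via the cocycle rule (\ref{eqn l conditions simp}) and left-cancelling the common factor $m(f(\sigma)\cup f'(\sigma),g(\sigma))$ produces precisely the contiguity condition (\ref{eqn - condition in simp}). So a poset span collapses to one contiguity step.

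It then remains to reduce an arbitrary homotopy $\Pi(f,\lambda)\sim\Pi(f',\lambda')$ to a sequence of such spans. Since $\sim$ is the transitive closure of comparability, I would normalise the connecting zigzag, merging consecutive steps of equal direction by transitivity of the order, into a strictly alternating chain $\Pi(f,\lambda)=d_0\le e_1\ge d_1\le\cdots\le e_k\ge d_k=\Pi(f',\lambda')$. Two structural facts tame the valleys: first, every $\Pi$-image is a minimal element of the Hom-poset, since $(g,\kappa)\le\Pi(f,\lambda)$ forces, by evaluation on singletons and then squeezing each simplex, $g=f$ and $\kappa=\lambda$; this makes the outer steps point upward, so $d_0,d_k$ are genuine valleys. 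Second, every object $(e,\rho)$ of the Hom-poset dominates a $\Pi$-image: choosing a vertex inside each $e(\{s\})$ defines a simplicial map $f_0$ with $f_0\le e$, and transporting $\rho$ through the natural isomorphism $\iota_{f_0\le e,m}$ supplies the corresponding $\kappa_0$. Replacing each interior valley $d_i$ by a $\Pi$-image $\Pi(f_i,\kappa_i)\le d_i$ keeps the chain valid and turns each span $\Pi(f_{i-1},\kappa_{i-1})\le e_i\ge \Pi(f_i,\kappa_i)$ into a contiguity; composing these yields $(f,\lambda)\sim(f',\lambda')$, so $\overline{\Pi}$ is faithful, and likewise on the finite subcategories.

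The main obstacle is exactly the interplay between the two homotopy relations: a priori the chain witnessing $\Pi(f,\lambda)\sim\Pi(f',\lambda')$ passes through poset morphisms $\scrS\to\scrT$ that are not induced by simplicial maps and hence lie outside the image of $\Pi$. The span lemma, together with the observation that valleys may always be pushed down onto $\Pi$-images, is what is needed to bypass these intermediate morphisms. The care in the proof lies in checking that this replacement preserves the alternating chain and in tracking the $m$-cocycle accurately through the cancellation in the span lemma.
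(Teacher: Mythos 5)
Your part (a) is exactly the argument the paper compresses into ``obvious by the definition of the functor $\Pi$'', and it is correct, including the reduction to singletons. For part (b) you take a genuinely different, and in fact more careful, route than the paper. The paper's proof passes immediately to a single comparability step, saying ``we may assume that $\Pi(f,\lambda)\le\Pi(f',\lambda')$'', and then runs what is precisely your span lemma in the degenerate case $(g,\kappa)=\Pi(f',\lambda')$: from $f(\sigma)\subseteq f'(\sigma)$ it gets $f(\sigma)\cup f'(\sigma)=f'(\sigma)\in\scrT$ and reads the inequality of the $A$-data as the contiguity condition (\ref{eqn - condition in simp}). What the paper does not address is exactly the obstacle you isolate: since $\sim$ in $\lPoset{}^A$ is the transitive closure of comparability, the morphisms $(\scrS,l)\to(\scrT,m)$ occurring in a connecting zigzag need not be induced by simplicial maps, so the reduction to one step between two $\Pi$-images is not automatic. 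Your three structural ingredients supply precisely the missing justification, and each checks out: the span lemma's cancellation of the common left factor $m\bigl(f(\sigma)\cup f'(\sigma),g(\sigma)\bigr)$ is valid under the cocycle conventions (\ref{eqn l conditions simp}) and (\ref{eqn le condition in poset}); the minimality of $\Pi$-images follows from your singleton-plus-squeeze argument $f(\sigma)\subseteq g(\sigma)\subseteq f(\sigma)$ together with $m(f(\sigma),f(\sigma))=1_A$; and every $(e,\rho)$ dominates a $\Pi$-image because the vertex-selection map $f_0$ is simplicial (nonempty subsets of simplices are simplices) and $\kappa_0=\iota_{f_0\le e,m}^{-1}\circ\rho$ is natural since $A$ is a group, so $\bullA$ is a groupoid. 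Note, incidentally, that your minimality lemma shows two comparable $\Pi$-images are already equal, so the paper's ``main case'' in fact collapses to part (a); what your longer argument buys is a complete treatment of arbitrary zigzags, where the paper's terse reduction leaves a real gap, at the cost of the extra Hom-poset analysis.
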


\begin{proof}
Part~(a) is obvious by the definition of the functor $\Pi$.
To prove Part~(b), let $[f,\lambda], [f',\lambda'] \colon (S,\scrS,l) \to (T,\scrT,m)$ be morphisms in $\scrH(\lposet{}^A)$ such that $\overline{\Pi}([f,\lambda])= \overline{\Pi}([f,\lambda])$. We want to show that $(f,l) \sim(f',l')$ in $\lSimp{}^A$. We may assume that $\Pi(f,\lambda) \leq \Pi(f',\lambda')$. Then for any $\sigma \in \scrS$, we have $f(\sigma) \subseteq f'(\sigma)$ and so $f(\sigma) \cup f'(\sigma)= f'(\sigma) \in\scrS$. Since $\Pi(f,\lambda) \leq \Pi(f',\lambda')$, we have $\iota_{f-f',m} \circ \lambda = \lambda'$. Thus, we get $(f,\lambda) - (f,\lambda)$ in $\lSimp{}^A$.
\end{proof}

\begin{nothing}{\em The functor $| \cdot | \colon \lsimp{}^A \to \ltop{}^A$.}
We recall the functor $| \cdot | \colon \lsimp{} \to \ltop{}$ following \cite[Chapter~3]{Spanier}. Let $(S,\scrS)$ be a simplicial complex. We set
$$| S, \scrS | = \{\alpha \in  \RR^S \mid \alpha(s) \geq 0, \supp(\alpha) \in \scrS\,\,\text{and}\,\, \sum\limits_{s \in S}\alpha(s)=1 \}$$
where $\supp(\alpha) = \{s \in S \mid \alpha(s) \neq 0 \}$ and $\RR^S$ is the set of functions from $S$ to $\RR$. Now consider the coproduct $ \coprod\limits_{\sigma \in \scrS} | \sigma |$ where 
$$| \sigma |= \{\alpha \in \RR^\sigma \mid \sum\limits_{s \in \sigma}\alpha(s)=1, \alpha(s) \geq 0\,\, \forall s \in \sigma \}.$$
We let $(\sigma, \alpha)$ denote the point $\alpha \in |\sigma|$  in $ \coprod\limits_{\sigma \in \scrS} | \sigma |$. In particular, $|S,\scrS|=   \faktor{\coprod\limits_{\sigma \in \scrS}|\sigma |}{\sim}$
where the relation $\sim$ is defined by

\begin{equation*}
(\sigma,\alpha)\sim (\tau,\beta) :\iff \alpha =\beta \quad\text{in}\quad \RR^S
\end{equation*}
for any $(\sigma, \alpha), (\tau,\beta) \in \coprod\limits_{\sigma \in \scrS}|\sigma|$. 

Given a morphism $f \colon (S,\scrS) \to (T,\scrT)$ in $\lsimp{}^A$, we define a morphism $|f|\colon |S,\scrS| \to |T,\scrT|$ such that
$|f|(\alpha)(t):= \sum\limits_{s \in f^{-1}(t)}\alpha(s)$ for any $\alpha \in |S,\scrS|$ and $t \in T$.

Now we will define the functor $|\cdot|\colon \lsimp{}^A \to \ltop{}^A$. The following construction is similar to the  construction 3.1 in \cite{Bal18}. For any $A$-monomial simplicial complex $(S,\scrS,l),$ we set $|S,\scrS,l|:= \faktor{E_{(S,\scrS,l)}}{\sim}$ where 
$$E_{(S,\scrS,l)}:= \coprod\limits_{\sigma \in \scrS}|\sigma |\times A=\{(\sigma, \alpha, a) \mid \sigma \in \scrS, \alpha \in |\sigma|, a\in A \}$$
and the relation $\sim$ is given by 
\begin{equation*}
(\sigma,\alpha,a)\sim (\tau,\beta,b) :\iff \alpha =\beta \quad\text{in}\quad \RR^S \quad\text{and}\quad al(\supp(\alpha),\sigma)= bl(\supp(\beta), \tau)
\end{equation*}
for any $(\sigma,\alpha,a), (\tau,\beta,b) \in E_{(S,\scrS,l)}$. It's straightforward to show that $\sim$ is an equivalence relation on $E_{(S,\scrS,l)}$. We denote the equivalence class containing $(\sigma, \alpha, a)$ by $[\sigma, \alpha, a]$. Obviously, the set $|S,\scrS,l|$ inherits the quotient topology and it has a continuous $A$-action given by
\begin{equation*}
\forall b \in A, [\sigma, \alpha, a] \in |S,\scrS,l|,\quad b\cdot[\sigma,\alpha,a]= [\sigma, \alpha, ba].
\end{equation*}
We define a map $\pi \colon |S,\scrS,l| \to |S,\scrS|$ given by $\pi([\sigma,\alpha,a])=\alpha$ for any $[\sigma,\alpha,a] \in |S,\scrS,l|$. Obviously, the map $\pi$ is surjective and continuous. Now we show that $\pi\colon |S,\scrS,l| \to |S,\scrS|$ is an $A$-fibered bundle. Let $\alpha \in |S,\scrS|$. We set
$$\varepsilon := \min\{ \alpha(s) \mid s \in \supp(\alpha) \}$$
and 
$$U_\alpha := \{\beta \in |S,\scrS| \mid |\beta(s)-\alpha(s)| < \varepsilon \quad \forall s \in S  \}.$$
Then $U_\alpha$ is open in $|S,\scrS|$. Note that for any $\beta \in U_\alpha$ we have $\supp(\alpha) \subseteq \supp(\beta)$ and 
$$\pi^{-1}(U_\alpha)= \{[\sigma, \beta, a] \mid \supp(\alpha)\subseteq \sigma, |\beta -\alpha| < \varepsilon, a \in A  \}.$$ 
The map $\varphi \colon \pi^{-1}(U_\alpha) \to U_\alpha \times A$, $[\sigma, \beta,a] \to (\beta, al(\supp(\alpha), \sigma)^{-1})$ is an $A$-equivariant homeomorphism such that
$$\xymatrix@R=4ex@C=3ex{
	\pi^{-1}(U_\alpha)\ar[rr]^-{\varphi}\ar[rd]_-{\pi}&&U_\alpha\times A\ar[ld]^{p_1}\\
	&U_\alpha&
}
$$
commutes. Thus, $\pi \colon |S,\scrS,l|\to |S,\scrS|$ is an $A$-fibered bundle. Now let $(f,\lambda) \colon (S,\scrS,l) \to (T,\scrT,m)$ be a morphism in $\lsimp{}^A$. We will define a morphism $|f,\lambda|\colon |S,\scrS,l| \to |T,\scrT,m|$ in $\ltop{}^A$ such that
$$\xymatrix@C=7ex@R=7ex{
	|S, \scrS, l|\ar[r]^-{|f, \lambda|}\ar[d]_-{\pi}&|T,\scrT,m|\ar[d]^-{\pi}\\
	|S,\scrS|\ar[r]_-{|f|}& |T,\scrT|}$$
commutes. Consider the restriction of the continuous function $\scrS\times\RR^S\times A \to \scrT\times\RR^T\times A$, $(\sigma,\alpha, a) \mapsto (f(\sigma), |f|(\alpha), a\lambda(\sigma))$ to $E_{(S,\scrS,l)} \to E_{(T,\scrT,m)}$. Moreover, it induces a continuous map on equivalence classes $|f,\lambda| \colon |S,\scrS,l| \to |T,\scrT,m|$. Thus, $|f,\lambda| \colon |S,\scrS,l| \to |T,\scrT,m|$ is a morphism in $\ltop{}^A$.
\end{nothing}
\begin{lemma}
If $(S,\scrS,l)$ and $(T, \scrT, m)$ are finite $A$-monomial simplicial complexes and $(f,\lambda),(f',\lambda')\colon(S,\scrS,l)\to (T, \scrT, m)$ are homotopic then $|(f,\lambda)|$ and $|(f',\lambda')|$ are homotopic in $\ltop{}^A$. In particular, $|\cdot|$ induces a functor $\overline{|\cdot|}\colon\scrH(\lsimp{}^A)\to\scrH(\ltop{}^A)$ such that the diagram
\begin{diagram}[75]
	\movevertex(-10,0){\lsimp{}^A} & \Ear[40]{|\cdot|} & \movevertex(10,0){\ltop{}^A} &&
	\movearrow(-15,0){\sar} & & \movearrow(15,0){\sar} &&
	\movevertex(-20,0){\scrH(\lsimp{}^A)} & \Ear[40]{\overline{|\cdot|}} & \movevertex(20,0){\scrH(\ltop{}^A)} &&
\end{diagram}
commutes, where the vertical arrows are the canonical functors.	
\end{lemma}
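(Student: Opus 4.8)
The plan is to reduce to a single step of the contiguity relation and then lift the classical straight-line homotopy of geometric realizations to the $A$-fibered total spaces; the whole point will be that the $A$-monomial contiguity condition (\ref{eqn - condition in simp}) lets us use a \emph{constant} fibre element. By definition the homotopy relation $\sim$ on $\Hom_{\lsimp{}^A}$ is the transitive closure of the contiguity relation $-$, while the homotopy relation on $\Hom_{\ltop{}^A}$ is already transitive (as noted just before Lemma~\ref{lem homotopy for top}). Hence it suffices to show that a single contiguity $(f,\lambda)-(f',\lambda')$ yields homotopic realizations: a contiguity chain then produces, step by step, a chain of homotopies in $\ltop{}^A$, and transitivity gives $|f,\lambda|\sim|f',\lambda'|$. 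The ``in particular'' clause (the induced functor $\overline{|\cdot|}$ and the commuting square) is then formal, exactly as in the previous lemmas, using that $|\cdot|$ is a functor. On the base spaces I would take the usual straight-line homotopy $h\colon|S,\scrS|\times I\to|T,\scrT|$, $h(\alpha,t):=(1-t)\,|f|(\alpha)+t\,|f'|(\alpha)$; it lands in $|T,\scrT|$ because $\supp h(\alpha,t)\subseteq f(\supp\alpha)\cup f'(\supp\alpha)$, which is a simplex of $\scrT$ since $f-f'$ in $\lSimp{}$, and clearly $h(\alpha,0)=|f|(\alpha)$, $h(\alpha,1)=|f'|(\alpha)$.

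The heart is the lift. Writing $\tau_\sigma:=f(\sigma)\cup f'(\sigma)\in\scrT$ and $\gamma_{\alpha,t}:=(1-t)|f|(\alpha)+t|f'|(\alpha)$, I would define $\tilde H\colon E_{(S,\scrS,l)}\times I\to|T,\scrT,m|$ by $(\sigma,\alpha,a,t)\mapsto[\tau_\sigma,\gamma_{\alpha,t},a\,c_\sigma]$, where $c_\sigma\in A$ is the single element determined by (\ref{eqn - condition in simp}), that is, the common value built from $m(f(\sigma),\tau_\sigma)$ and $\lambda(\sigma)$ on one hand and from $m(f'(\sigma),\tau_\sigma)$ and $\lambda'(\sigma)$ on the other. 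On each summand $\{\sigma\}\times|\sigma|\times A\times I$ this is continuous (affine in $(\alpha,t)$, translation in the topological group $A$), so $\tilde H$ is continuous into $E_{(T,\scrT,m)}$ and hence into the quotient $|T,\scrT,m|$. Since $I$ is locally compact, $E_{(S,\scrS,l)}\times I\to|S,\scrS,l|\times I$ is again a quotient map, so $\tilde H$ descends to a continuous $H\colon|S,\scrS,l|\times I\to|T,\scrT,m|$ as soon as $\tilde H(-,t)$ respects $\sim$ for each fixed $t$. The map $H$ is $A$-equivariant because $b\cdot[\sigma,\alpha,a]=[\sigma,\alpha,ba]\mapsto[\tau_\sigma,\gamma_{\alpha,t},(ba)c_\sigma]=b\cdot[\tau_\sigma,\gamma_{\alpha,t},a c_\sigma]$, and $\pi\circ H=h\circ(\pi\times\id_I)$ by construction, so the pair $(H,h)$ is a candidate homotopy in $\ltop{}^A$.

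For the compatibility of $\tilde H(-,t)$ with $\sim$ at $0<t<1$ one has $\supp\gamma_{\alpha,t}=f(\supp\alpha)\cup f'(\supp\alpha)$, and the check is a direct computation using the cocycle conditions (\ref{eqn l conditions simp}) for $m$ together with the naturality (\ref{eqn lambda conditions simp}) of $\lambda$; at $t\in\{0,1\}$ it is precisely the (already established) well-definedness of $|f,\lambda|$ and $|f',\lambda'|$. The endpoint matching is where (\ref{eqn - condition in simp}) is indispensable. At $t=0$ one has $\gamma_{\alpha,0}=|f|(\alpha)$ with $\supp|f|(\alpha)=f(\supp\alpha)$; rewriting the representative $[\tau_\sigma,|f|(\alpha),a c_\sigma]$ in the smaller simplex $f(\sigma)$ and using the cocycle identity for $m$ to cancel the $\alpha$-dependent factor $m(f(\supp\alpha),f(\sigma))$, the class collapses to $[f(\sigma),|f|(\alpha),a\lambda(\sigma)]=|f,\lambda|([\sigma,\alpha,a])$, which pins down $c_\sigma$ in terms of $m(f(\sigma),\tau_\sigma)$ and $\lambda(\sigma)$. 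Symmetrically the $t=1$ computation forces $c_\sigma$ in terms of $m(f'(\sigma),\tau_\sigma)$ and $\lambda'(\sigma)$ and matches $|f',\lambda'|$. The two prescriptions agree by exactly (\ref{eqn - condition in simp}), so one $t$-independent $c_\sigma$ serves both ends.

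The main obstacle is precisely this last coincidence. Since $A$ is an arbitrary, possibly disconnected, topological group, one cannot interpolate between two distinct fibre elements along $I$, so the lift is forced to use a fibre coordinate that is constant in $t$; the $A$-monomial contiguity condition (\ref{eqn - condition in simp}) is exactly what guarantees that the fibre value demanded at $t=0$ by $(f,\lambda)$ equals the one demanded at $t=1$ by $(f',\lambda')$. The remaining work is point-set bookkeeping: verifying that $\tilde H(-,t)$ is compatible with $\sim$ for intermediate $t$, and invoking that $-\times I$ preserves quotient maps so that continuity of $H$ follows from continuity of $\tilde H$.
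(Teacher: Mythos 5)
Your proposal follows essentially the same route as the paper's proof: reduce to a single contiguity step, take the straight-line homotopy $h(\alpha,t)=(1-t)|f|(\alpha)+t|f'|(\alpha)$ on the base, and lift it with a $t$-independent fibre coordinate $a\,c_\sigma$ on the simplex $f(\sigma)\cup f'(\sigma)$, where $c_\sigma$ is exactly the paper's element $\lambda(\sigma)^{-1}m(f(\sigma),f(\sigma)\cup f'(\sigma))^{-1}$ whose two descriptions agree by Equation~(\ref{eqn - condition in simp}), with well-definedness checked by the same three cases $t=0$, $t=1$, $0<t<1$ on $\supp h(\alpha,t)$. Your added point-set care (descending $\tilde H$ through the quotient using local compactness of $I$) is a detail the paper's proof glosses over, but it does not change the argument.
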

\begin{proof}
We may assume that $(f,\lambda)-(f',\lambda')$. We define the pair $(H,h)\colon (\pi \colon |S,\scrS, l| \to |S,\scrS|)\otimes(1_I\colon A\times I \to I)\to (\pi'\colon |T,\scrT, m| \to |T,\scrT|)$ where $h(\alpha,x)= (1-x)|f|(\alpha)+x|f'|(\alpha)$ 
for any $\alpha \in |S,\scrS|$, $x \in I$ and
$$H([\sigma,\alpha,a],x)= [f(\sigma)\cup f'(\sigma), h(\alpha,x), a\lambda(\sigma)^{-1}m(f(\sigma),f(\sigma)\cup f'(\sigma))^{-1} ]$$
for any $[\sigma,\alpha,x] \in |S,\scrS,l|$ and $x \in I$. First we show that $(H,h)$ is a morphism in $\ltop{}^A$. Obviously, we have $\sum\limits_{t \in T} h(\alpha,x)(t)=1$ and $h(\alpha,x)(t) \geq 0$ for any $t \in T$. Given $(\alpha,x) \in |S,\scrS|\times I$, we have
\begin{equation*}
\supp(h(\alpha,x))= \begin{cases} f(\supp(\alpha)) & \text{if $x=0$,} \\
f'(\supp(\alpha)) & \text{if $x=1$.}\\
f(\supp(\alpha))\cup f'(\supp(\alpha)) & \text{otherwise.}
\end{cases}
\end{equation*}

Since the maps $f,f'$ are simplicial maps and $f-f'$, we get $\supp(h(\alpha,x)) \in \scrT$.  So $h(\alpha,x) \in |T,\scrT|$. Moreover, we have $h(\alpha,0)= |f|(\alpha)$ and $h(\alpha,1)= |f'|(\alpha)$ for every $\alpha \in |S,\scrS|$.  Now we work on $H \colon |S,\scrS,l|\times I \to |T,\scrT,m|$, $([\sigma,\alpha,a],x) \mapsto [f(\sigma)\cup f'(\sigma), h(\alpha,x), a\lambda(\sigma)^{-1}m(f(\sigma),f(\sigma)\cup f'(\sigma))^{-1}]$. Since $f-f'$, we have $f(\sigma)\cup f'(\sigma) \in \scrT$. We get $h(\alpha,x) \in |f(\sigma) \cup f'(\sigma)|$ because $\alpha  \in |\sigma|$. We need to show that $H([\sigma,\alpha,a],x)= H([\tau,\beta,b],x)$ for any $x \in I$ and for any $(\sigma,\alpha, a) , (\tau, \beta, b) \in |S,\scrS,l|$ such that $(\sigma,\alpha, a)\sim(\tau, \beta, b)$. Note that $(\sigma,\alpha, a)\sim(\tau, \beta, b)$ gives $h(\alpha,x)=h(\beta,x)$. We need to show that
$$a\lambda(\sigma)^{-1}m(f(\sigma), f(\sigma)\cup f'(\sigma))^{-1}m(\supp(h(\alpha,x)), f(\sigma)\cup f'(\sigma))$$
$$= b\lambda(\tau)^{-1} m(f(\tau), f(\tau)\cup f'(\tau))^{-1}m(\supp(h(\beta,x)), f(\tau)\cup f'(\tau) )$$

We have three cases to consider. (i) $x=0$, (ii) $x=1$ and (iii) $x \neq 0,1$.

\smallskip
(i) Follows from
%$$a\lambda(\sigma)^{-1}m(f(\sigma), f(\sigma)\cup f'(\sigma))^{-1}m(\supp(h(\alpha,0)), f(\sigma)\cup f'(\sigma))$$ 
$$= a\lambda(\sigma)^{-1}m(f(\sigma), f(\sigma)\cup f'(\sigma))^{-1}m(f(\supp(
\alpha)), f(\sigma)\cup f'(\sigma))$$
%$$= a\lambda(\sigma)^{-1}m(f(\sigma), f(\sigma)\cup f'(\sigma))^{-1}m(f(\sigma), f(\sigma)\cup f'(\sigma))m(f(\supp(\alpha)), f(\sigma))$$
$$= a\lambda(\sigma)^{-1}m(f(\supp(\alpha)), f(\sigma))= al(\supp(\alpha), \sigma)\lambda(\supp(\alpha))^{-1}$$
$$= bl(\supp(\beta),\tau)\lambda(\supp(\beta))^{-1}=b\lambda(\tau)^{-1}m(f(\supp(\beta)), f(\tau))$$
%$$= b \lambda(\tau)^{-1}m(f(\tau), f(\tau)\cup f'(\tau))^{-1} m(f(\tau), f(\tau)\cup f'(\tau)) m(f(\supp(\beta)), f(\tau))$$
$$= b\lambda(\tau)^{-1} m(f(\tau), f(\tau)\cup f'(\tau))^{-1}m(f(\supp(\beta)), f(\tau)\cup f'(\tau) )$$
%$$= b\lambda(\tau)^{-1} m(f(\tau), f(\tau)\cup f'(\tau))^{-1}m(\supp(h(\beta,0)), f(\tau)\cup f'(\tau) )$$

\smallskip
(ii) Similar to part (i).
%Follows from
%$$a\lambda(\sigma)^{-1}m(f(\sigma), f(\sigma)\cup f'(\sigma))^{-1}m(\supp(h(\alpha,1)), f(\sigma)\cup f'(\sigma))$$
%$$=a\lambda'(\sigma)^{-1}m(f'(\sigma), f(\sigma)\cup f'(\sigma))^{-1}m(f'(\supp(\alpha)), f(\sigma)\cup f'(\sigma))$$
%$$=a\lambda'(\sigma)^{-1}m(f'(\sigma), f(\sigma)\cup f'(\sigma))^{-1}m(f'(\sigma), f(\sigma)\cup f'(\sigma))m(f'(\supp(\alpha)), f'(\sigma))$$
%$$=a\lambda'(\sigma)^{-1}m(f'(\supp(\alpha)), f'(\sigma))= al(\supp(\alpha), \sigma)\lambda'(\supp(\alpha))^{-1} $$
%$$=bl(\supp(\beta), \tau)\lambda'(\supp(\beta))^{-1}=b\lambda'(\tau)^{-1}m(f'(\supp(\beta)), f'(\tau))$$
%$$= b \lambda'(\tau)^{-1}m(f'(\tau), f(\tau)\cup f'(\tau))^{-1}m(f'(\tau), f(\tau)\cup f'(\tau) )m(f'(\supp(\beta)), f'(\tau))$$
%$$= b \lambda(\tau)^{-1}m(f(\tau), f(\tau)\cup f'(\tau))^{-1}m(f'(\supp(\beta)), f(\tau)\cup f'(\tau))$$
%$$= b\lambda(\tau)^{-1} m(f(\tau), f(\tau)\cup f'(\tau))^{-1}m(\supp(h(\beta,1)), f(\tau)\cup f'(\tau) )$$

\smallskip
(iii) Follows from
%$$a\lambda(\sigma)^{-1}m(f(\sigma), f(\sigma)\cup f'(\sigma))^{-1}m(\supp(h(\alpha,x)), f(\sigma)\cup f'(\sigma))$$
$$= a \lambda(\sigma)^{-1}m(f(\sigma), f(\sigma)\cup f'(\sigma))^{-1}m(f(\supp(\alpha))\cup f'(\supp(\alpha)) , f(\sigma) \cup f'(\sigma))$$
%$$= a \lambda(\sigma)^{-1}m(f(\sigma), f(\sigma)\cup f'(\sigma))^{-1}m(f(\supp(\alpha))\cup f'(\supp(\alpha)) , f(\sigma) \cup f'(\sigma))$$
%$$m(f(\supp(\alpha)), f(\supp(\alpha))\cup f'(\supp(\alpha)))m(f(\supp(\alpha)), f(\supp(\alpha))\cup f'(\supp(\alpha)))^{-1}$$
$$= a \lambda(\sigma)^{-1}m(f(\sigma), f(\sigma)\cup f'(\sigma))^{-1}m(f(\supp(\alpha)), f(\sigma) \cup f'(\sigma))$$
$$m(f(\supp(\alpha)), f(\supp(\alpha))\cup f'(\supp(\alpha)))^{-1}$$
%$$= a \lambda(\sigma)^{-1}m(f(\sigma), f(\sigma)\cup f'(\sigma))^{-1}m(f(\sigma), f(\sigma)\cup f'(\sigma))$$
%$$m(f(\supp(\alpha)), f(\sigma))m(f(\supp(\alpha)), f(\supp(\alpha))\cup f'(\supp(\alpha)))^{-1}$$
$$=a \lambda(\sigma)^{-1}m(f(\supp(\alpha)), f(\sigma))m(f(\supp(\alpha)), f(\supp(\alpha))\cup f'(\supp(\alpha)))^{-1}$$
$$= al(\supp(\alpha), \sigma)\lambda(\supp(\alpha))^{-1}m(f(\supp(\alpha)), f(\supp(\alpha)) \cup f'(\supp(\alpha)))$$
$$=b l(\supp(\beta), \tau)\lambda(\supp(\beta))^{-1} m(f(\supp(\beta)), f(\supp(\beta)) \cup f'(\supp(\beta)))^{-1}$$
$$=b\lambda(\tau)^{-1}m(f(\supp(\beta)), f(\tau))m(f(\supp(\beta)), f(\supp(\beta)) \cup f'(\supp(\beta)))^{-1}$$
%$$=b\lambda(\tau)^{-1} m(f(\tau), f(\tau)\cup f'(\tau))^{-1}m(f(\tau), f(\tau)\cup f'(\tau))$$
%$$m(f(\supp(\beta)), f(\tau))m(f(\supp(\beta)), f(\supp(\beta)) \cup f'(\supp(\beta)))^{-1}$$
$$=b\lambda(\tau)^{-1} m(f(\tau), f(\tau)\cup f'(\tau))^{-1}$$
$$m (f(\supp(\beta)), f(\tau)\cup f'(\tau))m(f(\supp(\beta)), f(\supp(\beta)) \cup f'(\supp(\beta)))^{-1}$$
%$$=b\lambda(\tau)^{-1} m(f(\tau), f(\tau)\cup f'(\tau))^{-1}m (f(\supp(\beta)) \cup f'(\supp(\beta)), f(\tau)\cup f'(\tau)) $$
%$$m(f(\supp(\beta)), f(\supp(\beta)) \cup f'(\supp(\beta)))m(f(\supp(\beta)), f(\supp(\beta)) \cup f'(\supp(\beta)))^{-1}$$
$$= b\lambda(\tau)^{-1} m(f(\tau), f(\tau)\cup f'(\tau))^{-1}m (f(\supp(\beta)) \cup f'(\supp(\beta)), f(\tau)\cup f'(\tau)) $$
%$$= b \lambda(\tau)^{-1}m(f(\tau), f(\tau)\cup f'(\tau))^{-1}m(\supp(h(\beta,x)), f(\tau)\cup f'(\tau))^{-1}.$$
So we have $H([\sigma,\alpha,a],x)= H([\tau,\beta,b],x)$ and $H$ is well-defined. It's not hard to show that the diagram
$$\xymatrix@C=7ex@R=7ex{
	|S,\scrS,l|\times I\ar[r]^-{H}\ar[d]_-{\pi\times 1_I}&|T,\scrT,m|\ar[d]^-{\pi'}\\
	|S,\scrS|\times I\ar[r]_-{h}& |T,\scrT|                              .
}$$
is commutative so $(H,h)$ is a morphism in $\ltop{}^A$. 
Now we show that $H([\sigma,\alpha,a],0)=|f,\lambda|([\sigma,\alpha,a])$. Since 
$$a\lambda(\sigma)^{-1}m(f(\sigma), f(\sigma)\cup f'(\sigma) )^{-1}m(f(\supp(\alpha) ), f(\sigma)\cup f'(\sigma) )$$
%$$= a\lambda(\sigma)^{-1}m(f(\sigma), f(\sigma)\cup f'(\sigma))^{-1}m(f(\sigma), f(\sigma)\cup f'(\sigma) ) m(f(\supp(\alpha)), f(\sigma))$$
$$=a\lambda(\sigma)^{-1}m(f(\supp(\alpha)), f(\sigma)),$$

we have 
$$ H([\sigma, \alpha, a], 0)= [f(\sigma)\cup f'(\sigma), h(\alpha,0), a \lambda(\sigma)^{-1}m(f(\sigma), f(\sigma)\cup f'(\sigma))^{-1}]$$
$$= [f(\sigma)\cup f'(\sigma), |f|(\alpha), a \lambda(\sigma)^{-1}m(f(\sigma), f(\sigma)\cup f'(\sigma))^{-1}] $$
$$= [f(\sigma),  |f|(\alpha), a\lambda(\sigma)^{-1}]= |f,\lambda|([\sigma, \alpha,a]).$$

The proof of $H([\sigma,\alpha,a],1)=|f',\lambda'|([\sigma,\alpha,a])$ for any $[\sigma,\alpha,a] \in |S,\scrS,l|$ follows similarly.
%Since
%$$a \lambda(\sigma)^{-1}m(f(\sigma), f(\sigma)\cup f'(\sigma) )^{-1}m(f'(\supp(\alpha) ), f(\sigma)\cup f'(\sigma))$$
%$$= a\lambda'(\sigma)^{-1}m(f'(\sigma), f(\sigma)\cup f'(\sigma) )^{-1}m(f'(\sigma), f(\sigma)\cup f'(\sigma))m(f'(\supp(\alpha)), f'(\sigma))$$
%$$= a\lambda'(\sigma)^{-1}m(f'(\supp(\alpha)), f'(\sigma)),$$
%we have 
%$$H([\sigma, \alpha, a], 1)=[f(\sigma)\cup f'(\sigma), h(\alpha,1), a \lambda(\sigma)^{-1}m(f(\sigma), f(\sigma)\cup f'(\sigma))^{-1}]$$
%$$= [f(\sigma)\cup f'(\sigma), |f'|(\alpha), a \lambda(\sigma)^{-1}m(f(\sigma), f(\sigma)\cup f'(\sigma))^{-1}]$$
%$$= [f'(\sigma),  |f'|(\alpha), a\lambda'(\sigma)^{-1}]= |f',\lambda'|([\sigma, \alpha,a]).$$
Thus, we get $|f,\lambda| \sim |f',\lambda'|$.
\end{proof}

\begin{lemma}
The functor $|\cdot|\colon \lsimp{}^A \to \ltop{}^A$ is faithful.
\end{lemma}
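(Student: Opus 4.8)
The plan is to show that the functor $|\cdot|\colon \lsimp{}^A \to \ltop{}^A$ is injective on each hom-set, i.e., that distinct $A$-monomial simplicial maps $(f,\lambda),(f',\lambda')\colon (S,\scrS,l)\to(T,\scrT,m)$ give rise to distinct morphisms $|f,\lambda|,|f',\lambda'|$ in $\ltop{}^A$. Recall that a morphism in $\ltop{}^A$ is a pair $(\varphi,g)$ with $\varphi$ the $A$-equivariant map of total spaces and $g$ the induced map of base spaces, so it suffices to recover both $f$ (from $g=|f|$) and $\lambda$ (from $\varphi=|f,\lambda|$) out of the geometric realization. The key observation is that the standard (unfibered) realization functor $|\cdot|\colon \lsimp{}\to\ltop{}$ is already faithful: a simplicial map $f$ is determined by its action on the vertices $s\in S$, and the vertex $s$ is recovered geometrically as the barycenter (Dirac) point $\delta_s\in |S,\scrS|$ with $\delta_s(s)=1$, whose image $|f|(\delta_s)=\delta_{f(s)}$ reads off $f(s)$. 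Hence $|f,\lambda|=|f',\lambda'|$ forces $g=g'$, and therefore $f=f'$ on vertices and so $f=f'$ as simplicial maps.

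With $f=f'$ in hand, I would then extract $\lambda$ from the $A$-equivariant total map. For a vertex $s\in S$, consider the point $[\{s\},\delta_s,1_A]\in |S,\scrS,l|$ lying in the fiber $\pi^{-1}(\delta_s)$. By the explicit formula defining $|f,\lambda|$, its image is $[f(\{s\}),|f|(\delta_s),1_A\cdot\lambda(\{s\})]=[\{f(s)\},\delta_{f(s)},\lambda(\{s\})]$, and similarly for $(f',\lambda')$. Since the fiber over $\delta_{f(s)}$ in $|T,\scrT,m|$ is a torsor under $A$ (it is mapped $A$-equivariantly and homeomorphically onto $\{f(s)\}\times A$ by the local triviality established in the construction), the equivalence class $[\{f(s)\},\delta_{f(s)},a]$ determines the element $a\in A$ uniquely once the base point and simplex are fixed. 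Thus $|f,\lambda|=|f',\lambda'|$ forces $\lambda(\{s\})=\lambda'(\{s\})$ for every vertex $s$.

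It then remains to upgrade equality on vertices to equality of the functions $\lambda,\lambda'\colon\scrS\to A$ on all simplices. Here I would use the cocycle condition built into an $A$-monomial simplicial map: for any simplex $\sigma\in\scrS$ and any vertex $s\in\sigma$, Equation~(\ref{eqn lambda conditions simp}) applied to $\{s\}\subseteq\sigma$ reads $m(f(\{s\}),f(\sigma))\cdot\lambda(\{s\})=\lambda(\sigma)\cdot l(\{s\},\sigma)$, so that $\lambda(\sigma)=m(f(\{s\}),f(\sigma))\cdot\lambda(\{s\})\cdot l(\{s\},\sigma)^{-1}$. Since $f=f'$ and $\lambda(\{s\})=\lambda'(\{s\})$, the right-hand side is identical for $\lambda$ and $\lambda'$, whence $\lambda(\sigma)=\lambda'(\sigma)$. (One checks that such a vertex $s$ exists because every simplex is a nonempty finite set.) This yields $(f,\lambda)=(f',\lambda')$ and proves faithfulness.

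The main obstacle I anticipate is the bookkeeping in the second step: one must verify that the element $a\in A$ recorded by a fiber point is genuinely well-defined, i.e., that the equivalence relation $\sim$ on $E_{(T,\scrT,m)}$ does not identify $[\{f(s)\},\delta_{f(s)},a]$ with $[\{f(s)\},\delta_{f(s)},a']$ for $a\neq a'$. This follows directly from the definition of $\sim$, since both representatives have support $\{f(s)\}$ and simplex $\{f(s)\}$, forcing $a\,l(\{f(s)\},\{f(s)\})=a'\,l(\{f(s)\},\{f(s)\})$ and hence $a=a'$ using $l(\tau,\tau)=1_A$; equivalently it is just the fact that $\pi$ is a genuine $A$-bundle with free fibers, as established in the construction preceding this lemma. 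Everything else is the formal identification of a morphism in $\ltop{}^A$ with its two component maps and reading off the defining formula for $|f,\lambda|$.
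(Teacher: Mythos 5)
Your proposal is correct and takes essentially the same route as the paper: the paper likewise recovers $f$ from the Dirac points $\bar{x}$ (your $\delta_s$) via $\supp(|f|(\bar{x}))$, and then reads the $A$-coordinate off fiber points, the only difference being that it extracts $\lambda(\sigma)$ directly for \emph{every} simplex $\sigma$ by applying the total-space map to the class of $(\sigma,\alpha,1_A)$, which makes your third step (propagating from vertices to all simplices via Equation~(\ref{eqn lambda conditions simp})) unnecessary. Your explicit check that the $A$-coordinate of a class $[\tau,\beta,a]$ with fixed simplex and support is well defined is in fact more scrupulous than the paper's computation, which compares representative tuples rather than equivalence classes.
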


\begin{proof}
Let $(S,\scrS,l)$ and $(T,\scrT,m)$ be $A$-monomial simplicial complexes. We want to show that for any $(f,\lambda), (f',\lambda') \in \Hom_{\lsimp{}^A}((S,\scrS,l), (T,\scrT,m))$
such that $(|f,\lambda|, |f|)=(|f',\lambda'|, |f'|)$ we have $(f,\lambda)=(f',\lambda')$.
Given $x \in S$, we define a map $\bar{x} \colon S \to I$ such that
\begin{equation*}
	\bar{x}(y):= 
	\begin{cases} 0 & x \neq y, \\ 1 & x=y.\end{cases}
\end{equation*}
Then 
$$f(x)= f(\supp(\bar{x}))= \supp(|f|(\bar{x}))= \supp(|f'|(\bar{x}))= f'(\supp(\bar{x}))= f'(x)$$
for any $x \in S$. Now we will show that $\lambda= \lambda'.$ Let $(\sigma, \alpha, 1) \in E_(S,\scrS,l)$. Then
$$ (f(\sigma), |f|(\alpha), \lambda(\sigma)^{-1})= |f,\lambda|(\sigma,\alpha,1)=|f',\lambda'|(\sigma, \alpha, 1) = (f'(\sigma), |f|(\alpha), \lambda'(\sigma)^{-1}).$$
gives $\lambda(\sigma)= \lambda'(\sigma)$. Thus, we have $(f,\lambda)= (f',\lambda')$.
\end{proof}

%%%%%%%%%%%%%%%%%%%%% SECTION 6 %%%%%%%%%%%%%%%%%%%%%%%%%%%%%%%

%\section{Representations of $G$ in a category $\catC$}

%\begin{nothing}{\it The category $\Rep(G,\catC)$.}
%Let $G$ be a group and let $\catC$ be a category. The category $\Rep(G,\catC)$ of representations of $G$ in $\catC$ has as objects  the pairs $(X, \rho)$, where $X$ is an object of $\catC$ and $\rho \colon G \to \Aut_\catC(X)$ is a group homomorphism. The set of morphisms from $(X,\rho)$ to $(X,\rho')$ consists of all $f\in\Hom_\catC(X,X')$ satisfying $\rho'(g)\circ f = f\circ \rho(g)$ for all $g\in G$. 

%\smallskip
%Each functor $F \colon \catC \to \catD$ induces a functor $\Rep(G,F)\colon\Rep(G,\catC) \to \Rep(G,\catD)$. It maps an object $(X,\rho)$ of $\Rep(G,\catC)$ to $(F(X), F\circ \rho)$ (by post-composing with $F\colon \Aut_\catC(X)\to \Aut_\catD(F(X))$, and which maps a morphism $f \colon (X,\rho) \to (X',\rho')$ in $\Rep(G,\catC)$ to $F(f)\colon F(X)\to F(X')$. Clearly, if $F$ is faithful, so is $\Rep(G,F)$.
%\end{nothing}

%\begin{remark}
%The category of $A$-monomial $G$-posets introduced in \cite{BoucMutlu} and $\Rep(G,\lposet{}^A)$ are isomorphic.
%\end{remark}

%%%%%%%%%%%%% BIBLIOGRAPHY %%%%%%%%%%%%%%%%%%%%%%%%%%%%%%%%%%%%
%\newpage

\end{document}